\newtheorem{theorem}{Theorem}[section]
\newtheorem{lemma}[theorem]{Lemma}
\newtheorem{proposition}[theorem]{Proposition}
\newtheorem{corollary}[theorem]{Corollary}
\newtheorem{remark}[theorem]{Remark}
\DeclareMathOperator{\Rm}{Rm}
\DeclareMathOperator{\Ric}{Ric}
\DeclareMathOperator{\vol}{vol}
\title[Ancient solutions of Ricci flow]{Ancient solutions of Ricci flow with Type~I curvature growth}
\author{Stephen Lynch}
\author{Andoni Royo Abrego}
\address{Department of Mathematics, Imperial College London, London SW7 2AZ, United Kingdom}
\email{stephen.lynch@imperial.ac.uk}
\address{Eberhard Karls Universit\"at T\"ubingen, Fachbereich Mathematik, Auf der Morgenstelle 10, 72076 T\"{u}bingen, Germany}
\email{andoni.royo-abrego@uni-tuebingen.de}
\begin{document}

\maketitle

\begin{abstract}
    Ancient solutions of the Ricci flow arise naturally as models for singularity formation. There has been significant progress towards the classification of such solutions under natural geometric assumptions. Nonnegatively curved solutions in dimensions 2 and 3, and uniformly PIC solutions in higher dimensions are now well understood. We consider ancient solutions of arbitrary dimension which are complete and have Type~I curvature growth. We show that a simply connected, noncompact, $\kappa$-noncollapsed Type~I ancient solution with nonnegative sectional curvature necessarily splits a Euclidean factor. It follows that a $\kappa$-noncollapsed Type~I ancient solution which is weakly PIC2 is a locally symmetric space. 
\end{abstract}

\section{Introduction}

A solution of a geometric flow, such as the Ricci flow or mean curvature flow, is called ancient if it exists on a time interval of the form $(-\infty,T]$. Rescaling shows that such solutions model the flow in regions of high curvature. Therefore, determining the possible shapes of ancient solutions is a central problem in the study of singularities.

In recent years there has been significant progress towards the classification of ancient solutions to the Ricci flow and mean curvature flow under natural geometric conditions. For example, all 3-dimensional ancient solutions of Ricci flow which are nonnegatively curved and $\kappa$-noncollapsed have been classified, in \cite{Perelman, Brendle_13, Brendle-Huisken-Sinestrari, Brendle_20, Angenent-Daskalopoulos-Sesum_22, Brendle-Daskalopoulos-Sesum}. For mean curvature flow, ancient solutions in $\mathbb{R}^3$ which are convex and interior noncollapsed have been classified in \cite{Huisken-Sinestrari, Haslhofer-Hershkovits, Haslhofer, Brendle-Choi, Angenent-Daskalopoulos-Sesum_19, Angenent-Daskalopoulos-Sesum_20}. These results have been generalised and expanded upon in various directions; further references are given below. 

In \cite{Lynch}, the first-named author classified ancient solutions of mean curvature flow (and a natural class of fully nonlinear flows) which are convex and have Type~I curvature growth --- all such solutions are homothetically shrinking cylinders. The key step was to establish that such a solution, if it is noncompact, splits a Euclidean factor. In this paper we prove analogous results for the Ricci flow. 

Let $(M, g(t))$, $t \in (-\infty, T]$, be an $n$-dimensional solution of Ricci flow which is complete, nonflat, satisfies the curvature bound
    \[\sup_{t\in(-\infty,T]} \sup_M |\Rm| < \infty,\]
and is $\kappa$-noncollapsed.\footnote{That is, $\vol(B_{g(t)}(x,r)) \geq \kappa r^n$ whenever $|\Rm| \leq r^{-2}$ in $B_{g(t)}(x,r)$.} We then call $(M,g(t))$ an ancient $\kappa$-solution. All of our results concern ancient $\kappa$-solutions which are either weakly PIC2 or have nonnegative sectional curvature. An ancient $\kappa$-solution is Type~I if there is a constant $C > 0$ such that
    \[|\Rm| \leq \frac{C}{T -t}\]
for all $t < T$, and is Type~II otherwise. 

A Riemannian manifold $(M,g)$ of dimension $n \geq 4$ is said to have nonnegative isotropic curvature if 
    \[\Rm_{1313}+\Rm_{1414}+\Rm_{2323}+\Rm_{2424}-2\Rm_{1234} \geq 0\]
for all orthonormal four-frames $\{e_1,e_2,e_3,e_4\}$. If the inequality is strict, $(M,g)$ is said to have positive isotropic curvature. For short, such spaces are said to be weakly PIC, and strictly PIC, respectively. If $\mathbb{R}^2\times(M,g)$ has nonnegative (positive) isotropic curvature, then $(M,g)$ is said to be weakly (strictly) PIC2. We note that a space with nonnegative curvature operator is necessarily weakly PIC2, and a weakly PIC2 manifold has nonnegative sectional curvature (see \cite[p.~100]{Brendle_10}). The PIC condition was introduced in \cite{Micallef-Moore}, and was studied in conjunction with the Ricci flow on 4-manifolds by Hamilton \cite{Hamilton_PIC}. It is preserved by the flow in all dimensions \cite{Nguyen, Brendle-Schoen_strict}, and played an essential role in the proof of the differentiable sphere theorem \cite{Brendle-Schoen_strict, Brendle-Schoen_weak}. 

We establish the following dimension-reduction result.

\begin{theorem}\label{main}
Let $(M,g(t))$, $t \in (-\infty, T]$, be a simply connected, noncompact Type I ancient $\kappa$-solution with nonnegative sectional curvature. There is an integer $1 \leq m \leq n-2$ such that, for every $t \in (-\infty, T]$, $(M,g(t))$ is isometric to the product of $\mathbb{R}^m$ with a compact Riemannian manifold.
\end{theorem}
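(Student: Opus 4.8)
The plan is to analyse the tangent flow of $(M,g(t))$ at $t = -\infty$, show that it is a nonflat shrinking gradient Ricci soliton which splits isometrically as a Euclidean factor times a compact factor, and then transfer this splitting to $(M,g(t))$ itself.

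\emph{Step 1: the asymptotic shrinker.} Fix $x_0 \in M$ and consider the parabolic rescalings $g_i(t) := \tau_i^{-1} g(T + \tau_i t)$ for $\tau_i \to \infty$. Since $(M, g(t))$ is Type~I, $\kappa$-noncollapsed and has globally bounded curvature, Perelman's monotonicity of the reduced volume (as exploited by Naber and by Enders--M\"uller--Topping) forces a subsequence of $(M, g_i(t), x_0)$ to converge, in the pointed Cheeger--Gromov--Hamilton sense, to a complete nonflat gradient shrinking Ricci soliton $(\bar M, \bar g(t), \bar f)$. The limit inherits nonnegative sectional curvature (a closed condition), $\kappa$-noncollapsing, and a global bound on $|\Rm|$. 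Moreover $\bar M$ is noncompact: otherwise the convergence would produce, for large $i$, a diffeomorphism of the compact manifold $\bar M$ onto an open --- hence also closed --- subset of the connected manifold $M$, forcing $M$ to be compact.

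\emph{Step 2: the shrinker is $\mathbb{R}^{\ell}\times(\text{compact})$.} Hamilton's strong maximum principle, applied to the self-similar flow generated by $\bar g$, shows that $\ker \Ric_{\bar g}$ is a parallel distribution of locally constant rank. Since a manifold with nonnegative sectional curvature and vanishing Ricci tensor is flat --- each sectional curvature occurs as a summand in a vanishing Ricci curvature --- the de Rham theorem reduces the claim to the statement that a complete, noncompact, $\kappa$-noncollapsed gradient shrinking Ricci soliton of bounded curvature and nonnegative sectional curvature which does not split a line must be compact; this contradiction then forces such a line to exist. Here every hypothesis is used: one studies the tangent cone of $(\bar M,\bar g)$ at spatial infinity, which by Cheeger--Colding is a metric cone of nonnegative curvature, and combines this with the soliton identity $\Ric + \nabla^2 \bar f = \tfrac12 \bar g$ and the quadratic growth $\bar f \sim \tfrac14 d(\cdot, x_0)^2$ of Cao--Zhou to control the ends of $\bar M$, ruling out a genuine cone vertex unless $\bar M$ already splits. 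Iterating, $(\bar M, \bar g)$ is isometric to $\mathbb{R}^{\ell}\times(P,h_P)$ with $P$ compact and $\ell \geq 1$.

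\emph{Step 3: transferring the splitting.} Applying Hamilton's strong maximum principle for the Ricci tensor to $(M,g(t))$ itself, $\ker\Ric_{g(t)}$ is a parallel, time-independent distribution of constant rank $m$, and --- using again that a nonnegatively curved Ricci-flat factor is flat, together with the simple connectedness of $M$ --- the solution splits isometrically as $\mathbb{R}^m \times (N, h(t))$, where $(N,h(t))$ is a simply connected ancient $\kappa$-solution which is Type~I, nonnegatively sectionally curved, and has $\Ric_{h(t)} > 0$ everywhere. The bounds $1 \leq m \leq n-2$ will hold because $m = n$ would make $M$ flat, $m = n-1$ would make $N$ a positively Ricci-curved $1$-manifold, and $m = 0$ would make $M = N$ noncompact with $\Ric > 0$, a case excluded by what follows. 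It thus remains to rule out a simply connected, noncompact, Type~I ancient $\kappa$-solution $N$ with nonnegative sectional curvature and $\Ric > 0$. By Steps~1 and~2, the asymptotic shrinker of such an $N$ is isometric to $\mathbb{R}^{\ell}\times P$ with $P$ compact and $\ell \geq 1$. The crux --- and the step I expect to be the main obstacle --- is to promote this product structure of the tangent flow at $-\infty$ to a product structure of $N$ itself: the splitting of the asymptotic shrinker yields, on ever larger regions of $(N, h_i(t))$ and at ever more negative times, vector fields that are parallel up to errors tending to zero, and these must be upgraded to genuine parallel vector fields on $(N, h(t))$ --- for instance by exploiting the parabolic smoothing of the Ricci flow and a backward-uniqueness argument for the linear system governing such fields along the flow. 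The resulting Euclidean de Rham factor on $N$ contradicts $\Ric_N > 0$, completing the proof.
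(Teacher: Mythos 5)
Your Steps 1--2 run roughly parallel to the paper (which simply cites Munteanu--Wang for the splitting of the noncompact asymptotic shrinker), but the decisive step --- transferring the product structure of the shrinker to the solution itself --- is exactly where your argument stops being a proof. You propose to upgrade ``vector fields that are parallel up to errors tending to zero'' on large regions at very negative times to genuine parallel fields on $(N,h(t))$ via ``parabolic smoothing and a backward-uniqueness argument'', but no such argument is given, and it is far from clear that one exists: backward uniqueness controls the past from the present, not the reverse, and approximate splittings seen only in the blow-down limit at $t=-\infty$ do not propagate to a fixed time slice by any standard mechanism. Ruling out a noncompact Type~I $\kappa$-solution with positive (sectional or Ricci) curvature is essentially the content of the theorem (cf.\ Corollary~\ref{Type II}), so this cannot be deferred as a technical afterthought. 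The paper's route around it is different and quantitative: using the Type~I distance-distortion estimate $0\le d_{g(s)}-d_{g(t)}\le C(\sqrt{-s}-\sqrt{-t})$ (Lemma~\ref{TypeIdistance}), one shows that the time-$t$ slices of the asymptotic shrinker converge in the pointed Gromov--Hausdorff sense, as $t\to 0^-$, to the asymptotic cone of the fixed slice $(M,g(-1))$; since the compact factor of the shrinker has diameter $O(\sqrt{-t})$, that cone is isometric to $\mathbb{R}^m$, and a Toponogov comparison/splitting argument (Lemma~\ref{cone split}) then splits $(M,g(t))$ itself as $\mathbb{R}^m\times(\text{compact})$. Nothing in your outline substitutes for this mechanism.

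Two further gaps, less central but real. First, in Step 2 you sketch rather than prove a Munteanu--Wang-type theorem (tangent cone at infinity, Cao--Zhou potential growth, ``ruling out a genuine cone vertex''); this is repaired by citing Munteanu--Wang directly, as the paper does. Second, and relevant to either version, your appeal to the de Rham theorem and to a global splitting of $\bar M$ requires the asymptotic shrinker to be simply connected, and simple connectedness does not automatically pass to pointed Cheeger--Gromov limits; the paper devotes Lemma~\ref{simply connected} (a soul-theorem argument combined with the Type~I distance estimate) to precisely this point, which your proposal does not address. Finally, the assertion in Step 3 that Hamilton's strong maximum principle makes $\ker\Ric_{g(t)}$ parallel under nonnegative sectional curvature alone needs justification (the standard statement concerns the curvature operator), although this step is in any case unnecessary in the paper's approach.
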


We note the following direct consequence of Theorem~\ref{main} (which, at least for ancient $\kappa$-solutions, answers Question 8 on p. 390 of \cite{Chow-Lu-Ni}). 

\begin{corollary}\label{Type II}
If $(M,g(t))$, $t \in (-\infty, T]$, is a noncompact ancient $\kappa$-solution with positive sectional curvature, then it is Type~II, i.e., 
    \[\limsup_{t \to -\infty} \sup_M \, (-t) |\Rm| = \infty.\]
\end{corollary}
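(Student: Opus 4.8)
The plan is to deduce Corollary~\ref{Type II} from Theorem~\ref{main} by contradiction. Suppose $(M,g(t))$, $t \in (-\infty,T]$, were a noncompact ancient $\kappa$-solution with positive sectional curvature that is not Type~II; by definition it is then Type~I. Positive sectional curvature is in particular nonnegative, so the only hypothesis of Theorem~\ref{main} not directly at hand is simple connectivity. I would obtain it by passing to the universal cover $\pi \colon (\tilde M, \tilde g(t)) \to (M, g(t))$, which is again a complete, noncompact, nonflat solution of Ricci flow on $(-\infty, T]$ with positive sectional curvature and the same Type~I curvature bound (both curvature and completeness pass through the local isometry $\pi$, and the universal cover of a noncompact manifold is noncompact). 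The one point worth checking is that $\kappa$-noncollapsedness lifts: since $\pi$ restricts to a surjective local isometry from $B_{\tilde g(t)}(\tilde x, r)$ onto $B_{g(t)}(\pi(\tilde x), r)$, one has $\vol(B_{\tilde g(t)}(\tilde x, r)) \geq \vol(B_{g(t)}(\pi(\tilde x), r))$, while a bound $|\Rm| \leq r^{-2}$ on the ball upstairs descends to the same bound on the ball downstairs, so $\kappa$-noncollapsedness of $(M,g(t))$ passes to $(\tilde M,\tilde g(t))$ with the same $\kappa$. (Alternatively, by the Gromoll--Meyer theorem a complete noncompact manifold of positive sectional curvature is diffeomorphic to $\mathbb{R}^n$, so $M$ is already simply connected.)

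Theorem~\ref{main} then applies to $(\tilde M, \tilde g(t))$ and exhibits it, for each $t$, as an isometric product $\mathbb{R}^m \times (N_t, h_t)$ with $1 \leq m \leq n-2$ and $N_t$ compact. In such a product every tangent $2$-plane containing a nonzero vector from the $\mathbb{R}^m$ factor is flat, so $(\tilde M, \tilde g(t))$ --- and therefore $(M, g(t))$, which is locally isometric to it --- has a zero sectional curvature, contradicting positivity. Hence $(M, g(t))$ is Type~II.

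Finally I would record that, in the presence of the a priori bound $K := \sup_{(-\infty,T]} \sup_M |\Rm| < \infty$, being Type~II (i.e., not Type~I) is equivalent to $\limsup_{t \to -\infty} \sup_M (-t)|\Rm| = \infty$. Indeed, if the solution is not Type~I then for each $j$ there is $t_j < T$ with $(T - t_j)\sup_M |\Rm|(t_j) > j$; the bound $K$ forces $T - t_j > j/K \to \infty$, hence $t_j \to -\infty$ and $(-t_j)/(T - t_j) \to 1$, so $(-t_j)\sup_M |\Rm|(t_j) \to \infty$. Conversely, a Type~I bound $|\Rm| \leq C/(T-t)$ makes $(-t)\sup_M |\Rm| \leq C(-t)/(T-t)$ bounded as $t \to -\infty$, so the limsup is finite.

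There is no genuine obstacle here: the entire substance of the corollary lives in Theorem~\ref{main}. What needs care is only the bookkeeping --- verifying that every hypothesis of Theorem~\ref{main}, and in particular $\kappa$-noncollapsedness, survives the passage to the universal cover, and matching the two equivalent formulations of the Type~II condition.
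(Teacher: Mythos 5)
Your proposal is correct and follows exactly the route the paper intends: the corollary is stated as a direct consequence of Theorem~\ref{main}, obtained by noting that a Type~I solution with the stated properties would split a Euclidean factor, contradicting positive sectional curvature. Your additional bookkeeping (simple connectivity via Gromoll--Meyer or the lift of $\kappa$-noncollapsing to the universal cover, and the equivalence of the two formulations of Type~II under the global curvature bound) is accurate and fills in exactly the details the paper leaves implicit.
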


If $(M,g(t))$ is as in Corollary~\ref{Type II}, then Hamilton's rescaling procedure \cite[Section~16]{Hamilton_Singularities} yields a sequence of flows $(M, \lambda_k g(\lambda_k^{-1} t), p_k)$ which converge smoothly (in the pointed Cheeger--Gromov sense) to an eternal solution with globally bounded scalar curvature, and which attains its maximum scalar curvature at a point at $t = 0$. If in addition $(M,g(t))$ is PIC2 and the eternal limit has positive Ricci curvature, then it is a steady soliton by Brendle's Harnack inequality \cite{Brendle_Harnack}. Examples of $\kappa$-noncollapsed steady solitons which are PIC2 include the Bryant soliton and the examples constructed in \cite{Lai}. Every known example of a noncompact, PIC2, Type~II ancient $\kappa$-solution is a steady soliton. 

A further consequence of Theorem~\ref{main} is the following classification result for Type~I ancient $\kappa$-solutions which are weakly PIC2. This was previously established in \cite{Li}, but we are able to give a more succinct proof. 

\begin{theorem}\label{einstein}
Let $(M,g(t))$, $t \in (-\infty, T]$, be a simply connected Type~I ancient $\kappa$-solution which is weakly PIC2. Then $(M,g(t))$ is isometric to $\mathbb{R}^m\times(\tilde M, \tilde g(t))$, where $0 \leq m \leq n-2$, and $(\tilde M, \tilde g(t))$ is a compact locally symmetric space with positive Ricci curvature.
\end{theorem}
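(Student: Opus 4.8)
The plan is to use Theorem~\ref{main} to reduce to the compact case and then decompose the compact factor into irreducible de Rham pieces, classifying each piece according to its holonomy group. Since weakly PIC2 implies nonnegative sectional curvature, Theorem~\ref{main} applies whenever $M$ is noncompact and produces an isometric splitting $(M,g(t))=\mathbb{R}^m\times(\tilde M,\tilde g(t))$ with $\tilde M$ compact; if $M$ is already compact we set $m=0$ and $\tilde M=M$. Because weakly PIC2 is preserved by the flow and the de Rham decomposition is preserved in time, the Euclidean factor is static and $(\tilde M,\tilde g(t))$ is itself a compact, simply connected, weakly PIC2, Type~I ancient $\kappa$-solution. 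Being compact and simply connected, $\tilde M$ has no flat de Rham factor. By Hamilton's strong maximum principle for Ricci flows with nonnegative Ricci curvature (applicable since weakly PIC2 is preserved and forces $\Ric\ge 0$), a zero of the Ricci curvature of such a metric would produce a flat de Rham factor, so in fact $\Ric>0$ on $(\tilde M,\tilde g(t))$ for every $t$. Decomposing $(\tilde M,\tilde g(t))=\prod_i(\tilde M_i,\tilde g_i(t))$ into its de Rham factors --- a splitting independent of $t$, with each $\tilde g_i(t)$ a Ricci flow --- each $(\tilde M_i,\tilde g_i(t))$ is a compact, simply connected, irreducible, weakly PIC2, Type~I ancient $\kappa$-solution with positive Ricci curvature, and it suffices to prove each one is locally symmetric.

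For a single irreducible factor I would argue from Berger's holonomy classification together with the strong maximum principle for the weakly PIC2 condition \cite{Brendle-Schoen_weak}, which gives that at each time either the curvature operator lies everywhere in the interior of the PIC2 cone (the \emph{strictly PIC2} case) or the holonomy is reduced. The holonomies $\mathrm{SU}$, $\mathrm{Sp}$, $G_2$, $\mathrm{Spin}(7)$ force Ricci-flatness, which with nonnegative sectional curvature forces flatness, contradicting positivity of the Ricci curvature. Quaternion-K\"ahler holonomy forces the metric to be Einstein with positive scalar curvature, and a compact Einstein manifold that is weakly PIC2 is locally symmetric by Brendle's theorem on Einstein manifolds with nonnegative isotropic curvature. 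If the holonomy is generic the factor is strictly PIC2, hence diffeomorphic to a sphere by the Brendle--Schoen differentiable sphere theorem \cite{Brendle-Schoen_strict}. If the holonomy is unitary the factor is K\"ahler and weakly PIC2 forces nonnegative holomorphic bisectional curvature; by Mok's solution of the generalized Frankel conjecture it is then biholomorphic either to an irreducible compact Hermitian symmetric space of rank at least two --- which, by Mok's rigidity, carries its symmetric metric, so there is nothing more to prove --- or to $\mathbb{CP}^k$.

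This leaves the factors diffeomorphic to $S^{m_i}$ or to $\mathbb{CP}^k$, and here the Type~I hypothesis is essential. Since the solution is Type~I and $\kappa$-noncollapsed and $\tilde M_i$ is compact, the theory of Type~I singularity models (Naber; Enders--M\"uller--Topping) provides a backward limit that is a compact gradient shrinking Ricci soliton, weakly PIC2 and diffeomorphic to $\tilde M_i$; a compact weakly PIC2 shrinking soliton is locally symmetric --- its irreducible factors are round spheres, Fubini--Study $\mathbb{CP}^k$'s, or Hermitian symmetric spaces, by the analysis above --- so this backward limit is the round metric, resp.\ the Fubini--Study metric. On the other hand, since $\Ric(\tilde g_i(t))>0$, the convergence theorem of Brendle--Schoen, resp.\ Mok's Ricci flow argument for K\"ahler metrics of nonnegative bisectional curvature on $\mathbb{CP}^k$, shows that a global Type~I singularity forms at $t=T$ whose rescaled limit is again the round, resp.\ Fubini--Study, metric. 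Since these are the unique locally symmetric metrics on the underlying manifold up to scaling, the forward and backward singularity models coincide, so Perelman's nondecreasing entropy $\nu(\tilde g_i(t))$ has equal limits as $t\to-\infty$ and as $t\to T$, hence is constant; therefore $(\tilde M_i,\tilde g_i(t))$ is a gradient shrinking soliton, namely the round sphere, resp.\ $\mathbb{CP}^k$ with the Fubini--Study metric. Consequently every $\tilde M_i$, and hence $\tilde M$, is locally symmetric with positive Ricci curvature, which proves the theorem. The main obstacle is assembling this structure theory accurately --- in particular the exact form of the strong maximum principle for weakly PIC2 (and the subtle distinction between the curvature-operator condition and positivity of the isotropic curvature of the product with $\mathbb{R}^2$), the K\"ahler case via Mok's theorem and convergence of the K\"ahler--Ricci flow, and the classification of compact weakly PIC2 shrinking solitons; alternatively the final step could be shortened by invoking a direct classification of compact PIC2 ancient $\kappa$-solutions, should one be available in sufficient generality.
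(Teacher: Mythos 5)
Your overall architecture coincides with the paper's: reduce to the compact factor via Theorem~\ref{main}, run Berger's holonomy classification together with the Brendle--Schoen strong maximum principle, handle the reduced (non-generic, non-unitary) holonomies through Brendle's theorem on Einstein manifolds with nonnegative isotropic curvature, invoke Mok in the K\"ahler case, and in the remaining sphere/$\mathbb{CP}^k$ cases compare the backward limit (asymptotic shrinker) with the forward limit of the flow through a monotone quantity to conclude the solution is itself a shrinking soliton; the paper uses Perelman's reduced volume where you use the $\nu$-entropy, which is an inessential variation. But two steps are not justified as written. First, there is no ``strong maximum principle for Ricci flows with nonnegative Ricci curvature'' that converts a zero of $\Ric$ into a flat de Rham factor: $\Ric \geq 0$ is not preserved in dimension $\geq 4$, Hamilton's strong maximum principle concerns the curvature operator (and its Brendle--Schoen refinement concerns the PIC2 condition), not the Ricci tensor, and the parallelism of the Ricci kernel is exactly what would need to be proved. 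This step is in fact dispensable: in the paper positivity of the Ricci curvature is an output, read off at the end from the fact that each irreducible factor of the universal cover is a symmetric space of compact type; but then your later appeals to $\Ric > 0$ must be repaired --- the forward convergence in the sphere case requires strict PIC2 (which is available from \cite[Proposition~10]{Brendle-Schoen_weak} in the generic-holonomy case), not positive Ricci.

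Second, and more substantively, the pivotal identification of the backward limit is asserted rather than proved. ``The analysis above'' (Mok plus holonomy) only yields that the K\"ahler factor is biholomorphic to $\mathbb{CP}^k$ or is Hermitian symmetric; to conclude that the asymptotic shrinker on $\mathbb{CP}^k$ carries the Fubini--Study metric one needs that a compact K\"ahler--Ricci shrinking soliton on $\mathbb{CP}^k$ is K\"ahler--Einstein (vanishing of the Futaki invariant) together with the Bando--Mabuchi uniqueness theorem; likewise, in the sphere case the symmetric alternative requires Kostant's theorem that a symmetric metric on a topological sphere is round (Lemma~\ref{sphere} in the paper). These are precisely the ingredients you defer to as an unresolved ``obstacle'' (the classification of compact weakly PIC2 shrinking solitons), so the proposal has a genuine gap exactly where the paper's K\"ahler case does its real work; minor additional care is also needed to justify that the de Rham splitting is time-independent (the paper only stabilizes it for $t \leq \bar t$ and then propagates forward).
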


The compact factor $(\tilde M, \tilde g(t))$ in Theorem~\ref{einstein} need not be a shrinking soliton, but does split as a product of Einstein manifolds. 

Refinements of Theorem~\ref{einstein} have been proven under stronger hypotheses. A nonnegatively curved Type~I ancient Ricci flow of dimension 2 has constant curvature by \cite{Daskalopoulos-Hamilton-Sesum}. A 3-dimensional Type~I ancient $\kappa$-solution with nonnegative sectional curvature has universal cover isometric to $\mathbb{S}^3$ if it is compact \cite{Perelman}, or $\mathbb{R}\times\mathbb{S}^2$ if it is noncompact (see \cite{Hallgren}, \cite{Zhang} and \cite{Brendle_20}). In higher dimensions, a compact Type~I ancient $\kappa$-solution which is strictly PIC2 has constant curvature by \cite{Brendle-Huisken-Sinestrari} (see also \cite{Ni}). A noncompact Type~I ancient $\kappa$-solution which is
uniformly PIC and weakly PIC2 has universal cover isometric to $\mathbb{R}\times\mathbb{S}^{n-1}$ (see \cite{Naff_19} and  \cite{Brendle-Naff}). Enders--M\"{u}ller--Topping showed that, for a general complete solution of Ricci flow, blow-ups at a Type~I singularitiy are shrinking solitons \cite{Enders-Mueller-Topping}. 

Let us remark on the hypotheses of Theorem~\ref{einstein}. Bakas, Ni and Kong constructed examples of compact ancient solutions of the Ricci flow on certain spheres which are Type~I but fail to be locally symmetric \cite{Bakas-Ni-Kong}. Among them are solutions which have positive curvature operator but are not $\kappa$-noncollapsed, and solutions that are $\kappa$-noncollapsed but only have positive sectional curvature. This means that Theorem~\ref{einstein} fails if we remove the $\kappa$-noncollapsing assumption, or if we replace weakly PIC2 by nonnegative sectional curvature. 

As an illustration we note that in dimension 4, Theorem~\ref{einstein} amounts to the following statement. 
\begin{corollary}
A simply connected, weakly PIC2, Type~I ancient $\kappa$-solution of dimension 4 is isometric (up to parabolic rescaling) to one of the following:
\begin{itemize}
    \item The product of two self-similarly shrinking round two-spheres. 
    \item The self-similarly shrinking solution generated by
    \[\mathbb{S}^4, \;\; \mathbb{CP}^2, \;\; \mathbb{R}\times\mathbb{S}^3, \;\; \text{or} \;\; \mathbb{R}^2\times\mathbb{S}^2.\]
\end{itemize}
\end{corollary}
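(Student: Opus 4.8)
The plan is to read the statement off from Theorem~\ref{einstein} and the structure theory of symmetric spaces. Applying Theorem~\ref{einstein} with $n=4$, one obtains an isometric splitting $(M,g(t)) = \mathbb{R}^m\times(\tilde M,\tilde g(t))$ with $0\leq m\leq 2$, where $(\tilde M,\tilde g(t))$ is compact, has positive Ricci curvature, and is locally symmetric at each time. Since $M$ is simply connected, so is $\tilde M$, and a complete simply connected locally symmetric space is globally symmetric; the de Rham theorem then writes $\tilde M$ as a Riemannian product of irreducible symmetric spaces. Because $\Ric>0$ on $\tilde M$, none of these factors can be flat, so each is a compact irreducible symmetric space, in particular Einstein with positive scalar curvature. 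Fixing a time slice, $\tilde g$ is therefore a product of positive Einstein metrics on irreducible symmetric spaces; by uniqueness of the Ricci flow, $\tilde g(t)$ is just the evolution of this product, in which each Einstein factor shrinks homothetically.

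It then remains to go through the cases $\dim\tilde M = 4-m$. For $m=2$, $\tilde M$ is a simply connected positively curved surface, hence a round $\mathbb{S}^2$ shrinking self-similarly, and $(M,g(t))$ is the self-similarly shrinking solution generated by $\mathbb{R}^2\times\mathbb{S}^2$. For $m=1$, $\tilde M$ is a three-dimensional simply connected irreducible positively curved symmetric space, hence the round $\mathbb{S}^3$, giving the self-similarly shrinking solution generated by $\mathbb{R}\times\mathbb{S}^3$. For $m=0$, $M=\tilde M$ is a four-dimensional compact simply connected symmetric space with positive Ricci curvature; by Cartan's classification the irreducible such spaces are $\mathbb{S}^4$ and $\mathbb{CP}^2$, while the only reducible one is a product $\mathbb{S}^2\times\mathbb{S}^2$ of two round two-spheres (a one-dimensional, hence flat, factor being excluded). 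In the two irreducible cases the metric is Einstein, so the flow is homothetic and produces the self-similarly shrinking solution generated by $\mathbb{S}^4$ or $\mathbb{CP}^2$; in the reducible case, each spherical factor evolves independently as a self-similarly shrinking round two-sphere, which is the first item in the list. Allowing a parabolic rescaling to normalise the scale then gives exactly the stated dichotomy.

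I expect the point requiring care to be the bookkeeping rather than any estimate: one must invoke Cartan's classification to be sure that $\mathbb{S}^4$ and $\mathbb{CP}^2$ are the only four-dimensional compact irreducible symmetric spaces, and one must notice that in the reducible case $m=0$ the two two-sphere factors are permitted to have \emph{distinct} extinction times, so that the resulting ancient solution is genuinely a product of self-similar solutions and need not be self-similar itself --- this is why the first item appears separately from the self-similarly shrinking $\mathbb{R}^2\times\mathbb{S}^2$. For completeness one should also check that every solution in the conclusion satisfies the hypotheses, i.e.\ is weakly PIC2: the round spheres and their products have nonnegative curvature operator, and $\mathbb{CP}^2$ with its Fubini--Study metric, being weakly quarter-pinched, is weakly (but not strictly) PIC2.
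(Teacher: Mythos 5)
Your proposal is correct and follows exactly the route the paper intends: the corollary is stated as a direct consequence of Theorem~\ref{einstein} (together with Proposition~\ref{compact einstein}), and your case analysis via Cartan's classification in dimensions $\leq 4$, including the observation that the two round $\mathbb{S}^2$ factors may have different extinction times so that $\mathbb{S}^2\times\mathbb{S}^2$ need not be self-similar, is precisely the bookkeeping the paper leaves to the reader.
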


\subsection{Further related work} All convex ancient solutions of curve-shortening flow have been classified \cite{Daskalopoulos-Hamilton-Sesum_curve, BLT_curve}. Ancient solutions of mean curvature flow which are convex, interior noncollapsed and uniformly two-convex were classified in all dimensions in \cite{Brendle-Choi_higherdimensions, Angenent-Daskalopoulos-Sesum_20}. Recently, there has been progress towards a classification of all interior noncollapsed convex ancient solutions of mean curvature flow in $\mathbb{R}^4$ \cite{Zhu, Choi-Haslhofer-Hershkovits, Du-Haslhofer_a, Du-Haslhofer_b, CDDHS}. If the noncollapsing assumption is dropped, many more examples of convex ancient solutions arise \cite{BLT_collapse}. Important structural results for such solutions were established in \cite{Wang, Brendle-Naff_noncollapsing, BLL}, in addition to \cite{BLT_collapse}.  

All positively curved ancient solutions of Ricci flow of dimension 2 have been classified \cite{Daskalopoulos-Hamilton-Sesum, Chu, Daskalopoulos-Sesum}. Ancient solutions which are $\kappa$-noncollapsed, unformly PIC, and strictly PIC2 were classified in \cite{Brendle-Naff, Brendle-Daskalopoulos-Naff-Sesum}. 

Some of the most fundamental results concerning ancient solutions of the Ricci flow were established by Perelman \cite{Perelman}, and then put to use in his construction of a flow with surgeries. The surgery construction for mean curvature flow carried out in \cite{Haslhofer-Kleiner} also makes extensive use of ancient solutions (but they play less of a role in other versions of the construction \cite{Huisken-Sinestrari_surgery, Brendle-Huisken}). 

\subsection{Outline} In Section~\ref{splitting} we prove Theorem~\ref{main}. This is achieved by extracting an asymptotic shrinker at $t = -\infty$ (obtained by parabolic rescalings), and an asymptotic cone (obtained by scaling down the distance function at a fixed time), and then relating these two different limits. The asymptotic shrinker is simply connected and has nonnegative sectional curvature, so it splits a Euclidean factor by work of Munteanu and Wang. Using the Type~I property, we show that the asymptotic cone is isometric to said Euclidean factor. An application of Toponogov's triangle comparison and splitting theorems then gives Theorem~\ref{main}.

In Section~\ref{einstein section} we prove Theorem~\ref{einstein}. In light of Theorem~\ref{main}, it suffices to show that a compact Type~I ancient $\kappa$-solution is locally symmetric and has positive Ricci curvature. This follows from a standard blow-down argument and convergence results for PIC2 metrics under the Ricci flow. 

\subsection{Acknowledgements} We would like to express thanks to S. Brendle, G. Huisken, K. Naff and M. Wink for helpful conversations relating to this work. We also thank Y. Li for his correspondence, which led us to correct an error in the statement of Theorem~\ref{main}. 


\section{Dimension reduction for noncompact Type~I solutions}\label{splitting}

In this section we establish Theorem~\ref{main}. A key tool in the proof is the following result of Naber \cite[Theorem~3.1]{Naber}, which generalised earlier work of Perelman \cite{Perelman}. 
\begin{lemma}\label{shrinker}
Let $(M,g(t))$, $t \in (-\infty, T]$, be a Type~I ancient $\kappa$-solution. Fix $p \in M$ and let $\lambda_k \to 0$ be a sequence of scales. The pointed rescaled flows $(M, \lambda_kg(\lambda_k^{-1}t), p)$, $t \in (-\infty, \lambda_kT]$, subconverge smoothly\footnote{Here we mean smooth convergence in the pointed Cheeger--Gromov sense --- see \cite{Hamilton_Singularities} for the precise definition.} to a pointed gradient shrinking soliton $(\bar M, \bar g(t), \bar p)$, $t \in (-\infty, 0)$.
\end{lemma}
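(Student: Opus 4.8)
The plan is to combine the compactness theorem for Ricci flows with the monotonicity of Perelman's reduced volume, following Perelman \cite{Perelman} and Naber \cite{Naber} (see also \cite{Enders-Mueller-Topping}). Write $g_k(t) := \lambda_k g(\lambda_k^{-1}t)$, which is a Ricci flow on $t \in (-\infty, \lambda_k T]$. The Type~I bound rescales to $|\Rm_{g_k}| \leq C/(\lambda_k T - t)$, so on every compact subinterval of $(-\infty, 0)$ the curvatures of the $g_k$ are uniformly bounded once $k$ is large. Since $\kappa$-noncollapsing is scale-invariant, the $g_k$ are uniformly $\kappa$-noncollapsed, and together with the curvature bound this gives a uniform positive lower bound for the injectivity radius at $(p,t)$ as $t$ ranges over compact subsets of $(-\infty, 0)$. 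Hamilton's compactness theorem \cite{Hamilton_Singularities} then produces a subsequence along which $(M, g_k(t), p)$ converges smoothly to a complete ancient limit flow $(\bar M, \bar g(t), \bar p)$, $t \in (-\infty, 0)$. It remains to show that this limit is a gradient shrinking soliton.

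For each $k$ let $V_k(\tau)$ be Perelman's reduced volume of $g_k$ based at the spacetime point $(p, \lambda_k T)$, where $\tau = \lambda_k T - t > 0$, so that $V_k(\tau) = \int_M (4\pi\tau)^{-n/2} e^{-\ell_k(\cdot, \tau)}\,dV_{g_k(\lambda_k T - \tau)}$, with $\ell_k$ the corresponding reduced length. The reduced volume is scale-invariant, so $V_k(\tau) = V_1(\lambda_k^{-1}\tau)$, where $V_1$ is the reduced volume of the original flow $g = g_1$ based at $(p, T)$. By Perelman's monotonicity formula $\tau \mapsto V_1(\tau)$ is nonincreasing; moreover $V_1(\tau) \leq 1$, since $V_1(\tau) \to 1$ as $\tau \to 0^+$ (the basepoint time $T$ being a regular time), while $\kappa$-noncollapsing yields $V_1(\tau) \geq \kappa' > 0$ for all $\tau$. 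Hence $V_\infty := \lim_{\tau \to \infty} V_1(\tau)$ exists in $(0,1]$, and for each fixed $\tau > 0$ we have $V_k(\tau) = V_1(\lambda_k^{-1}\tau) \to V_\infty$ as $k \to \infty$.

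The crux is to pass to the limit in the reduced volume, i.e.\ to show $V_k(\tau) \to \bar V(\tau)$ for each $\tau > 0$, where $\bar V$ is the reduced volume of the limit flow $\bar g$ (based at the singular time $0$). There are two ingredients. First, the minimizing $\mathcal{L}$-geodesics of $g_k$ issuing from $(p, \lambda_k T)$ are stable under the smooth convergence $g_k \to \bar g$, so $\ell_k \to \bar\ell$ locally uniformly on $\bar M \times (0,\infty)$ and the associated measures converge locally. Second, upgrading this to convergence of the integrals requires a uniform-in-$k$ tightness estimate, and this is where the Type~I hypothesis is \emph{essential}: it provides a lower bound $\ell_k(q,\tau) \geq c^{-1} d_{g_k}(p,q)^2/\tau - c$, while the curvature bound forces $\Ric_{g_k} \geq -(C/\tau)\,g_k$ at time $\lambda_k T - \tau$ and hence, by Bishop--Gromov, at most exponential volume growth of geodesic balls about $p$ --- all with constants independent of $k$. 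Together these dominate the integrands by a fixed integrable Gaussian-type weight, so no reduced volume escapes to spatial infinity and $V_k(\tau) \to \bar V(\tau)$. Combined with the previous paragraph, $\bar V(\tau) = V_\infty$ for all $\tau > 0$: the reduced volume of the limit flow is constant.

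Finally, one invokes the rigidity case of Perelman's monotonicity formula: a Ricci flow whose reduced volume is constant in $\tau$ is a gradient shrinking soliton. Applied to $(\bar M, \bar g(t))$, $t \in (-\infty, 0)$, this completes the proof. I expect the main obstacle to be the tightness argument in the third paragraph --- establishing the uniform lower bound on the reduced length together with the accompanying volume bound, so that the reduced volumes converge without loss of mass at spatial infinity. This is precisely the point at which Type~I curvature growth enters in an indispensable way, and is the content of Naber's extension of Perelman's blow-down argument for $3$-dimensional $\kappa$-solutions.
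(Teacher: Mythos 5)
The paper does not prove this lemma at all --- it is imported verbatim as Naber's Theorem~3.1 --- and your sketch is precisely the Perelman--Naber reduced-volume argument underlying that citation: parabolic rescaling, Hamilton--Cheeger--Gromov compactness via $\kappa$-noncollapsing, Type~I control of the reduced length $\ell_k$ giving tightness of the reduced volumes, and the rigidity case of the monotonicity formula forcing the shrinking soliton equation on the limit. So your route coincides with the intended (cited) proof; the one assertion stated too casually is that $V_1(\tau)\geq\kappa'>0$ follows from $\kappa$-noncollapsing alone --- Naber's lower bound also uses the Type~I \emph{upper} bound on $\ell$ at well-chosen points (and in any case positivity of $V_\infty$ can be deduced a posteriori from the tightness argument rather than assumed), but this does not affect the structure of the proof.
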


We refer to any shrinking soliton which arises by the rescaling procedure described in Lemma~\ref{shrinker} as an asymptotic shrinker for $(M,g(t))$. The notation $(\bar M, \bar g(t))$ will always refer to an asymptotic shrinker. Note that if $M$ is compact, then $M$ and $\bar M$ are diffeomorphic. This follows from Hamilton's distance distortion estimate (see Lemma~\ref{distance distortion} below). 

\begin{remark} The existence of an asymptotic shrinker requires that the rescaled flows $(M, \lambda_kg(\lambda_k^{-1}t), p)$ admit a uniform lower bound for the injectivity radius at $p$. This follows from the $\kappa$-noncollapsing assumption. The proofs of Theorem~\ref{main} and Theorem~\ref{einstein} only use $\kappa$-noncollapsing in this way, so it could be replaced by any other condition which leaves Lemma~\ref{shrinker} valid.
\end{remark}

We also make use of the following result. The assertion is that a space with nonnegative sectional curvature, and whose asymptotic cone splits a Euclidean factor, must itself split the same factor. This seems to be well-known, but for completeness we provide a proof based on the Toponogov splitting theorem. 

We recall that for a complete Riemannian manifold $(M,g)$ with nonnegative sectional curvature, given a sequence $\lambda_k \to 0$, the sequence of metric spaces $(M, \lambda_k d_g, p)$ subconverges in the pointed Gromov--Hausdorff sense. Any limiting metric space obtained from $(M,g)$ in this way is called an asymptotic cone for $(M,g)$ at $p$. 

\begin{lemma}\label{cone split}
    Let $(M, g)$ be a complete Riemannian $n$-manifold of nonnegative sectional curvature and suppose that it has an asymptotic cone which is isometric to $\mathbb{R}^m$. The space $(M, g)$ then splits isometrically as $\mathbb{R}^m \times \tilde M^{n-m}$, where $\tilde M^{n-m}$ is compact.
\end{lemma}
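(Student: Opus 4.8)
The plan is to show that $(M,g)$ contains a geodesic line and then to split it off via the Toponogov splitting theorem, iterating. First dispose of the case $m=0$: then the asymptotic cone is a single point, so $M$ has bounded diameter (a ray would produce points at arbitrarily large distance from the basepoint in any asymptotic cone) and is therefore compact, and we take $\tilde M = M$. So from now on $m \geq 1$, and in particular $M$ is noncompact.

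To produce a line I would fix $\lambda_k \to 0$ with $(M, \lambda_k d_g, p) \to (\mathbb{R}^m, o)$ in the pointed Gromov--Hausdorff sense, and pick $\bar x, \bar y \in \mathbb{R}^m$ with $d(o,\bar x) = d(o,\bar y) = 1$ and $d(\bar x, \bar y) = 2$. Approximating $\bar x$ and $\bar y$ produces $x_k, y_k \in M$ with $\lambda_k d_g(p,x_k) \to 1$, $\lambda_k d_g(p,y_k)\to 1$ and $\lambda_k d_g(x_k, y_k) \to 2$; in particular the angle at $p$ of the Euclidean comparison triangle for $p, x_k, y_k$ tends to $\pi$. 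Let $\eta_k$ and $\zeta_k$ be unit-speed minimizing geodesics from $p$ to $x_k$ and to $y_k$, both of length tending to infinity, and let $\ell_k$ be the smaller of the two lengths. By the monotonicity of comparison angles along geodesics emanating from a point (a form of Toponogov's theorem under nonnegative sectional curvature), for each fixed $s$ and all large $k$ the comparison angle at $p$ of $p\,\eta_k(s)\,\zeta_k(s)$ is at least that of $p\,\eta_k(\ell_k)\,\zeta_k(\ell_k)$, and the latter converges to $\pi$ (the points $\eta_k(\ell_k)$ and $\zeta_k(\ell_k)$ differ from $x_k$ and $y_k$ by $o(\lambda_k^{-1})$). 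Hence the comparison angle at $p$ of $p\,\eta_k(s)\,\zeta_k(s)$ tends to $\pi$. Passing to a subsequence along which the initial directions of $\eta_k$ and $\zeta_k$ converge, I obtain rays $\gamma_1, \gamma_2$ from $p$ (being limits of minimizing geodesics of unbounded length, they are rays), with the comparison angle at $p$ of $p\,\gamma_1(s)\,\gamma_2(s)$ equal to $\pi$ for every $s > 0$. This is equivalent to $d_g(\gamma_1(s), \gamma_2(s)) = 2s = d_g(p, \gamma_1(s)) + d_g(p, \gamma_2(s))$, so the concatenation of $\gamma_1$ and $\gamma_2$ is a line.

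By the Toponogov splitting theorem, $(M,g)$ is then isometric to $\mathbb{R} \times (M_1, g_1)$ with $(M_1, g_1)$ complete and of nonnegative sectional curvature. Since a pointed Gromov--Hausdorff limit of Riemannian products is the product of the limits, after passing to a subsequence of the $\lambda_k$ the factor $(M_1, g_1)$ admits an asymptotic cone $C_1$ with $\mathbb{R} \times C_1$ isometric to $\mathbb{R}^m$, which forces $C_1$ to be isometric to $\mathbb{R}^{m-1}$. Iterating $m$ times yields an isometric splitting $(M,g) \cong \mathbb{R}^m \times (N,h)$ in which $(N,h)$ is complete, nonnegatively curved, and admits an asymptotic cone equal to a point; by the $m=0$ case $N$ is compact, which proves the lemma.

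The hard part will be the production of the line, specifically the claim that the comparison angle at $p$ of $p\,\eta_k(s)\,\zeta_k(s)$ converges to $\pi$. The subtlety is that the midpoint of the segment from $x_k$ to $y_k$ need not remain in a fixed compact region, so one cannot simply extract a limit of those segments; instead one uses that the approximating geodesics $\eta_k$ and $\zeta_k$ nonetheless reach the scale $\sim \lambda_k^{-1}$ on which $M$ resembles the cone and applies Toponogov's comparison there, with the monotonicity pointing in the right direction, while carefully bookkeeping the (slightly different) lengths of $\eta_k$ and $\zeta_k$. The remaining ingredients — the Toponogov splitting theorem, the behaviour of asymptotic cones under Riemannian products, and the induction on $m$ — are routine.
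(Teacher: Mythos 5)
Your proof is correct and follows essentially the same route as the paper: produce nearly antipodal points $x_k,y_k$ from the Gromov--Hausdorff approximation, use Toponogov's angle comparison and its monotonicity to build a geodesic line through $p$, split off an $\mathbb{R}$ factor via the Toponogov splitting theorem, and iterate, concluding compactness of the final factor from its asymptotic cone being a point. The only (harmless) difference is in how minimality of the line is obtained: you apply the angle monotonicity to the approximating segments $\eta_k,\zeta_k$ at a fixed parameter $s$ and then pass to the limit rays, whereas the paper first takes the limit rays and runs a second comparison argument with auxiliary points $\bar x_k,\bar y_k$ on them; both versions are valid.
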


Before proceeding with the proof, we recall Toponogov's angle comparison theorem (see eg. \cite[Theorem 10.3.1]{Burago-Burago-Ivanov}), which will play a key role. 

\begin{theorem}[Toponogov]
Consider a complete Riemannian manifold $(M,g)$ which has nonnegative sectional curvature. Let $\rho:[0,S] \to (M,g)$ and $\eta:[0,T] \to (M,g)$ be length minimising unit-speed geodesics such that $\rho(0) = \eta(0)$. The function 
    \begin{equation}\label{angle}
    (s,t) \mapsto \arccos\bigg(\frac{s^2 + t^2 - d_g(\rho(s),\eta(t))^2}{2st}\bigg)
    \end{equation}
is nonincreasing in $s$ for any fixed $t \in [0,T]$. 
\end{theorem}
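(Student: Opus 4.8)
Write $p := \rho(0) = \eta(0)$, fix $t \in (0,T]$, put $q := \eta(t)$, and consider $g(s) := d_g(\rho(s),q)^2$ for $s \in [0,S]$. Because $\rho$ and $\eta$ are minimising, the argument of $\arccos$ in \eqref{angle} lies in $[-1,1]$, so the function is well defined; because $\eta|_{[0,t]}$ is minimising, $g(0) = d_g(p,q)^2 = t^2$. Set $c(s) := g(s) - s^2$, so that $s^2 + t^2 - g(s) = t^2 - c(s) = c(0) - c(s)$ and hence
\[
\arccos\!\Bigl(\frac{s^2+t^2-g(s)}{2st}\Bigr) = \arccos\!\Bigl(\frac{c(0)-c(s)}{2ts}\Bigr).
\]
Since $\arccos$ is decreasing and $t > 0$ is fixed, it suffices to show that $s \mapsto \tfrac{c(0)-c(s)}{s}$ is nondecreasing on $(0,S]$. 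But this is the negative of the secant slope of $c$ based at the origin, and \textbf{such secant slopes are monotone as soon as $c$ is concave}. Thus the whole theorem reduces to proving that $g(s) - s^2$ is concave on $[0,S]$.

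To prove that $c = g - s^2$ is concave, the plan is to exhibit at each $s_0 \in (0,S)$ a smooth upper support function for $g$ whose second derivative at $s_0$ is at most $2$; then $c$ has a smooth concave upper support at every point, hence is concave. Let $r := d_g(\cdot,q)$. Where $\rho(s_0) \notin \{q\}\cup\operatorname{Cut}(q)$, the function $r$ is smooth near $\rho(s_0)$; since $\rho$ is a geodesic, $\tfrac{d^2}{ds^2}(r\circ\rho) = \operatorname{Hess} r(\rho',\rho')$, and the standard Hessian comparison theorem for distance functions under $\sec\ge 0$ gives $\operatorname{Hess} r(X,X) \le \tfrac1r\bigl(|X|^2 - \langle X,\nabla r\rangle^2\bigr)$, whence $g'' = 2(r\circ\rho)'^2 + 2(r\circ\rho)(r\circ\rho)'' \le 2$ near $s_0$. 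Where $\rho(s_0) \in \operatorname{Cut}(q)$, I would invoke Calabi's trick: choose a minimising geodesic $\gamma$ from $q$ to $\rho(s_0)$, set $q_\varepsilon := \gamma(\varepsilon)$, and note that $r_\varepsilon := d_g(\cdot,q_\varepsilon) + \varepsilon$ satisfies $r_\varepsilon \ge r$ with equality at $\rho(s_0)$ and is smooth near $\rho(s_0)$ for all small $\varepsilon > 0$ (as $\rho(s_0)\notin\operatorname{Cut}(q_\varepsilon)$); the computation above, applied to $r_\varepsilon$, yields $\tfrac{d^2}{ds^2}(r_\varepsilon\circ\rho)^2\big|_{s_0} \le 2 + C(s_0)\varepsilon$, so $(r_\varepsilon\circ\rho)^2$ is a smooth upper support for $g$ at $s_0$ with second derivative $\le 2 + C(s_0)\varepsilon$; letting $\varepsilon\to 0$ shows $c$ has concave upper supports everywhere. (The remaining possibility $\rho(s_0) = q$ is trivial: $s\mapsto(s-s_0)^2$ is then a smooth upper support for $g$ at $s_0$ with second derivative exactly $2$.)

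The only genuine obstacle is the cut locus of $q$ met along $\rho$, which forces the support-function argument in place of a bare pointwise Hessian estimate; once $g(s) - s^2$ is known to be concave, everything else is soft. (An alternative, more classical route would first prove the hinge comparison for geodesic triangles contained in a sufficiently small ball via Jacobi-field and Rauch estimates, and then globalise by subdividing $\rho$ into short arcs and chaining the local inequalities with Alexandrov's lemma on the additivity of comparison angles; in that approach the globalisation step, rather than the cut locus, is the delicate point.)
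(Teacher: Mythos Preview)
The paper does not prove this statement; Toponogov's theorem is quoted as background, with a reference to Burago--Burago--Ivanov. Your argument is correct and self-contained. The reduction to concavity of $s \mapsto d_g(\rho(s),q)^2 - s^2$ is clean: the identity $s^2 + t^2 - g(s) = c(0) - c(s)$ uses only that $\eta$ is minimising (so $g(0)=t^2$), and the monotonicity of secant slopes based at the origin for a concave function is exactly what is needed. The Hessian comparison and Calabi's trick at the cut locus are standard and correctly applied; your bound $\tfrac{d^2}{ds^2}(r_\varepsilon\circ\rho)^2\big|_{s_0}\le 2 + C(s_0)\varepsilon$ follows from $r_\varepsilon^2 = d(\cdot,q_\varepsilon)^2 + 2\varepsilon\,d(\cdot,q_\varepsilon) + \varepsilon^2$ together with $\operatorname{Hess}\bigl(d(\cdot,q_\varepsilon)^2\bigr) \le 2g$ and $\operatorname{Hess}\,d(\cdot,q_\varepsilon) \le d(\cdot,q_\varepsilon)^{-1}g$. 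One small expository point: rather than saying ``$c$ has concave upper supports everywhere,'' it is more precise to say that for each $s_0$ and each $\delta>0$ there is an upper support with second derivative at most $\delta$; this is what the $\varepsilon\to 0$ actually yields, and it is exactly the barrier criterion for concavity.

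By way of comparison, the reference the paper cites proceeds via the route you sketch at the end: one first proves a local hinge/triangle comparison inside convex balls using Rauch-type Jacobi-field estimates, and then globalises by subdividing long sides and iterating Alexandrov's lemma. Your Hessian-comparison approach trades the globalisation step for a pointwise cut-locus argument via Calabi barriers; it is more analytic and arguably shorter once Hessian comparison is taken as known, while the classical route stays purely in comparison geometry and extends verbatim to Alexandrov spaces. Both are valid.
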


The quantity in \eqref{angle} converges to the angle between $\rho'(0)$ and $\eta'(0)$ as $(s,t) \to 0$. 

\begin{proof}[Proof of Lemma~\ref{cone split}]
Fix some $p\in M$ and a sequence of positive numbers $\lambda_k \to 0$, and construct an asymptotic cone by extracting a subsequential pointed Gromov--Hausdorff limit of $(M,\lambda_k d,p)$, where $d$ is the Riemannian distance function on $(M,g)$. We assume that the cone is isometric to $\mathbb{R}^m$.

Let $B$ denote the ball of radius 2 around the origin in $\mathbb{R}^m$. By the Gromov--Hausdorff convergence, there are maps $\phi_k : B \to M$ such that $\phi(0) = p$ and
    \[\lim_{k\to \infty} \big| d_{\mathbb{R}^m}(x,y) - \lambda_k d(\phi_k(x), \phi_k(y))\big| = 0\]
for all $x, y \in B$. In particular, for the points
    \[x_k := \phi_k((1,0,...,0)), \qquad y_k := \phi_k((-1,0,...,0)),\]
we have
    \begin{equation}\label{cone split asymptotics}
    \lim_{k \to \infty}\lambda_k d(p, x_k) = 1, \qquad \lim_{k \to \infty}\lambda_k d(p, y_k) = 1, \qquad \lim_{k \to \infty} \lambda_k d(x_k, y_k) = 2 .
    \end{equation}
Since both $x_k$ and $y_k$ escape to infinity in $(M,g)$ as $k \to \infty$, we can construct geodesic rays $\gamma_x$ and $\gamma_y$ emanating from $p$ as the limit of geodesic segments from $p$ to $x_k$ and $y_k$, respectively. If we denote by $\alpha_k \in [0,\pi]$ the angle of the geodesic triangle $(p, x_k, y_k)$ at $p$, Toponogov's angle comparison theorem implies that $\alpha_k \geq \theta_k$, where
    \[\theta_k := \arccos \frac{d(p, x_k)^2 + d(p, y_k)^2 - d(x_k, y_k)^2}{2d(p,x_k)d(p, y_k)}\]
is the corresponding Euclidean comparison angle. Inserting \eqref{cone split asymptotics}, we obtain
    \[\lim_{k\to \infty} \theta_k = \lim_{k\to\infty} \arccos\bigg( \frac{d(p, x_k)^2 + d(p, y_k)^2 - d(x_k, y_k)^2}{2d(p,x_k)d(p, y_k)} \bigg) = \pi,\]
and hence 
    \[\lim_{k \to \infty} \alpha_k = \pi.\]
We conclude that the angle formed by the rays $\gamma_x$ and $\gamma_y$ at $p$ is $\pi$, and hence the concatenation $\gamma := -\gamma_x \frown \gamma_y : \mathbb{R} \to M$ is smooth (by Picard--Lindel\"{o}f). 

We now claim that every subinterval of $\gamma$ is length-minimizing between its endpoints. To see this, consider a new pair of sequences $\Bar{x}_k \in \gamma_x$ and $\Bar{y}_k \in \gamma_y$, defined uniquely by the requirement that
    \[d(p, x_k)=d(p, \bar x_k), \qquad d(p, y_k)=d(p, \bar y_k).
    \]
We appeal to the angle comparison theorem again to control $d(x_k,\bar x_k)$ and $d(y_k,\bar y_k)$. From the definition of $\gamma_x$ and $\gamma_y$, the angles at $p$ formed by the geodesic triangles $(p, x_k ,\bar x_k)$ and $(p, y_k,  \bar y_k)$ converge to zero, so by angle comparison we have
    \[ 0 \geq \lim_{k\to\infty} \arccos \bigg(1 - \frac{\lambda_k^2d(\bar x_k, x_k)^2}{2\lambda_k^2 d(p,x_k)d(p,\bar x_k)}\bigg),\]
and hence \eqref{cone split asymptotics} implies
    \[\lim_{k \to \infty}\lambda_k d(\bar x_k, x_k) = 0.\] 
Similarly,
    \[\lim_{k \to \infty}\lambda_k d(\bar y_k, y_k) = 0.\]
We now multiply the inequality
    \[d(x_k, y_k) - d(x_k, \bar x_k) - d(\bar y_k, y_k) \leq d(\bar x_k, \bar y_k) \leq d(\bar x_k, x_k) + d(x_k, y_k) + d(y_k, \bar y_k),
    \]
by $\lambda_k$ and take the limit to obtain
    \[\lim_{k \to \infty} \lambda_k d(\bar x_k, \bar y_k) = 2.
    \]
We thus conclude that $\bar \theta_k \to \pi$, where $\bar \theta_k$ is the Euclidean comparison angle
    \[\bar \theta_k = \arccos\bigg(\frac{d(p, \bar x_k)^2 + d(p, \bar y_k)^2 - d(\bar x_k, \bar y_k)^2}{2d(p,\bar x_k)d(p, \bar y_k)}\bigg).
    \]
By angle monotonicity, we conclude that $\bar \theta_k = \pi$ for all $k$, and hence
    \[d(\bar x_k, \bar y_k) = d(\bar x_k, p) + d(p, \bar y_k)\]
for all $k$. It follows that $\gamma$ is minimizing on every subinterval.

We now appy Toponogov's splitting theorem to conclude that $(M,g)$ is isometric to $\mathbb{R} \times (\tilde M^{n-1}, \tilde g)$, where $(\tilde M^{n-1}, \tilde g)$ is a complete Riemannian $(n-1)$-manifold with nonnegative sectional curvature. The space $(\tilde M^{n-1}, \tilde g)$ has an asymptotic cone isometric to $\mathbb{R}^{m-1}$, so we can repeat the above argument, and continue iteratively until we have exhibited $(M,g)$ as $\mathbb{R}^m \times (\tilde M^{n-m}, \tilde g)$, where $(\tilde M^{n-m}, \tilde g)$ is a Riemannian $(n-m)$-manifold whose asymptotic cone is a single point. It follows that $\tilde M^{n-m}$ is compact, for otherwise it would contain a ray, in which case all of its asymptotic cones would also contain a ray. 
\end{proof}

We recall that one can compare the Riemannian distance at different times along the Ricci flow using Hamilton's distance distortion estimate. For a Type I ancient solution with nonnegative Ricci curvature we have the following. 

\begin{lemma}\label{TypeIdistance}
    Let $(M,g(t))$, $t \in (-\infty, 0]$, be an ancient solution of the Ricci flow with nonnegative Ricci curvature and which satisfies the Type~I condition $|\Rm| \leq \bar C(-t)^{-1}$, $t < 0$. There is a constant $C=C(n, \bar C)$ such that
        \[0 \leq d_{g(s)}(x, y) - d_{g(t)}(x, y) \leq C(\sqrt{-s} - \sqrt{-t})
        \]
    for every $x, y \in M$ and $-\infty < s < t \leq 0$.
\end{lemma}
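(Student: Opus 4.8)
The plan is to prove the two inequalities separately; only the upper bound requires real work.

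For the lower bound, I would simply observe that $\partial_t g(t) = -2\Ric_{g(t)} \le 0$, so $g(t)$ is nonincreasing in $t$ as a quadratic form. Hence for any piecewise smooth path $\gamma$ joining $x$ to $y$ and any $s < t$ one has $L_{g(t)}(\gamma) \le L_{g(s)}(\gamma)$; taking $\gamma$ to be a $g(s)$-minimizing geodesic yields $d_{g(t)}(x,y) \le L_{g(t)}(\gamma) \le L_{g(s)}(\gamma) = d_{g(s)}(x,y)$, which is the left-hand inequality.

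For the upper bound, the plan is to combine Hamilton's distance distortion estimate with the Type~I bound and then integrate in time. Recall Hamilton's estimate (see \cite{Hamilton_Singularities}): if at some time $t_0$ one has $\Ric_{g(t_0)} \le (n-1)K$ on each of the two balls $B_{g(t_0)}(x, r_0)$ and $B_{g(t_0)}(y, r_0)$, then
\[
\liminf_{h \downarrow 0} \frac{d_{g(t_0 + h)}(x,y) - d_{g(t_0)}(x,y)}{h} \ge -2(n-1)\Big(\tfrac{2}{3} K r_0 + \tfrac{1}{r_0}\Big).
\]
The crucial feature --- and the point on which the whole argument turns --- is that only curvature bounds in balls around the two endpoints enter, so the right-hand side does not grow with $d_{g(t_0)}(x,y)$. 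A naive computation along a minimizing geodesic $\gamma$ would only give $\tfrac{d}{dt}d_{g(t)}(x,y) = -\int_\gamma \Ric(\gamma', \gamma')$, and the Type~I bound is not integrable along a geodesic of unbounded length; the second-variation argument with cutoff functions is exactly what localizes the estimate near the endpoints and keeps the constant dimensional.

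Concretely, fix $x,y \in M$ and a time $t_0 < 0$. The Type~I hypothesis gives $|\Rm| \le \bar C(-t_0)^{-1}$ on all of $M$ at time $t_0$, hence $\Ric_{g(t_0)} \le c(n)\bar C (-t_0)^{-1}$ for a dimensional constant $c(n)$; I would apply the estimate with $(n-1)K = c(n)\bar C(-t_0)^{-1}$ and the balancing choice $r_0 = \sqrt{-t_0}$, which gives
\[
\liminf_{h \downarrow 0} \frac{d_{g(t_0 + h)}(x,y) - d_{g(t_0)}(x,y)}{h} \ge -C_0(n, \bar C)\,(-t_0)^{-1/2}.
\]
Since $|\Rm|$ is bounded on $M \times [s,t]$, the function $\tau \mapsto d_{g(\tau)}(x,y)$ is locally Lipschitz, so this lower bound on its lower forward Dini derivative integrates: for $-\infty < s < t < 0$,
\[
d_{g(t)}(x,y) - d_{g(s)}(x,y) \ge -C_0(n,\bar C)\int_s^t (-\tau)^{-1/2}\, d\tau = -2C_0(n,\bar C)\big(\sqrt{-s} - \sqrt{-t}\big),
\]
and the case $t = 0$ follows by continuity of $\tau \mapsto d_{g(\tau)}(x,y)$, the integral still being convergent. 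Rearranging gives the claimed bound with $C = 2C_0$.

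The main obstacle is really the localized form of Hamilton's estimate: obtaining a distance-distortion bound that depends only on $n$ and $\bar C$ rather than on $d_{g(t)}(x,y)$. Once that is in hand the remaining steps --- choosing $r_0 = \sqrt{-t}$ to match the Type~I scale, and passing from the barrier-sense differential inequality to the integrated one via local Lipschitz continuity of the distance --- are routine.
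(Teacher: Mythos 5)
Your proposal is correct and follows essentially the same route as the paper: the lower bound comes from $\Ric \geq 0$ making distances nonincreasing, and the upper bound comes from Hamilton's localized distance-distortion estimate (the paper's combination of Lemmas~17.3 and 17.4(a) of \cite{Hamilton_Singularities}) applied at the Type~I scale $r_0 \sim \sqrt{-t}$ and then integrated in time. Your extra remarks on local Lipschitz continuity and the endpoint $t=0$ are routine refinements of the same argument, not a different approach.
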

\begin{proof}
Since $\Ric \geq 0$, we have 
    \[\frac{d}{dt} d_{g(t)}(x,y) \leq 0.\]
This gives the first inequality.

Set $\sigma = (\sup_M |\Rm|(t))^{-1/2}$ in part (a) of \cite[Lemma~17.4]{Hamilton_Singularities}, and combine the result with \cite[Lemma~17.3]{Hamilton_Singularities} to obtain 
    \[\frac{d}{dt} d_{g(t)}(x,y) \geq -\frac{C}{\sqrt{-t}}.\]
After integrating, this gives the second inequality.
\end{proof}

Before carrying out the proof of Theorem~\ref{main}, we note the following consequence of Lemma~\ref{TypeIdistance}. 

\begin{lemma}\label{simply connected}
Let $(M,g(t))$, $t \in (-\infty,T]$, be a Type~I ancient $\kappa$-solution with nonnegative sectional curvature. Let $(\bar M, \bar g(t))$, $t \in (-\infty,0)$, be an asymptotic shrinker for $(M,g(t))$. If $M$ is simply connected, then $\bar M$ is simply connected.
\end{lemma}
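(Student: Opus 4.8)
The plan is to show that $\bar M$ has trivial fundamental group by ruling out any noncontractible loop. Since $(\bar M, \bar g(t))$ arises as a pointed Cheeger--Gromov limit of the rescaled flows $(M, \lambda_k g(\lambda_k^{-1}t), \bar p_k)$, there exist, for each $R > 0$, exhaustion domains $U_k \subset \bar M$ containing the metric ball $B_{\bar g(-1)}(\bar p, R)$ and diffeomorphisms $\Phi_k : U_k \to V_k \subset M$ onto open sets, under which $\lambda_k \Phi_k^* g(-\lambda_k^{-1}) \to \bar g(-1)$ smoothly on $U_k$. The key point is that, because $M$ is simply connected, each $V_k$ has the property that loops in it which are noncontractible in $V_k$ must nonetheless bound in $M$; I need to promote this to contractibility of loops in $\bar M$ itself. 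For this I would argue as follows: any loop $\gamma$ in $\bar M$ lies, for $k$ large, in $\Phi_k(B_{\bar g(-1)}(\bar p, R))$ for some fixed $R$; transporting it to $M$ via $\Phi_k$, it bounds a disk $D_k$ in $M$ since $M$ is simply connected; the obstruction is that $D_k$ need not be contained in the region where $\Phi_k^{-1}$ is defined, so I cannot immediately pull it back.

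To overcome this, I would use Lemma~\ref{TypeIdistance} to control the geometry of $M$ at the relevant scale. Fix a basepoint $p \in M$ and times $t_k := -\lambda_k^{-1}$. The Type~I bound together with Lemma~\ref{TypeIdistance} shows that distances measured in $g(t_k)$ and distances measured in $g(t)$ for any fixed $t$ are comparable up to additive errors of order $\sqrt{-t_k} = \lambda_k^{-1/2}$, hence up to \emph{multiplicative} error $1 + o(1)$ after rescaling by $\lambda_k$; more precisely $\lambda_k d_{g(t_k)}(x,y) \le \lambda_k d_{g(t)}(x,y) \le \lambda_k d_{g(t_k)}(x,y) + C\lambda_k^{1/2}$. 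Consequently the metric balls $B_{\lambda_k g(t_k)}(\bar p_k, \rho)$ and $B_{\lambda_k g(t)}(\bar p_k, \rho)$ differ by a controlled amount, and in particular the identity map $M \to M$ is, after rescaling, an almost-isometry between the time-$t_k$ and time-$0$ metrics on any fixed ball. This lets me replace the moving metric $g(t_k)$ by the fixed metric $g(0)$: the rescaled flows converging to $\bar g(-1)$ can be compared to $(M, \lambda_k g(0), \bar p_k)$, which also subconverges (pass to a further subsequence) to a limit metric space, and by nonnegative sectional curvature I may invoke stability of the fundamental group under Cheeger--Gromov convergence with a lower curvature bound.

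The cleanest route is then: since $\bar g(-1)$ has nonnegative sectional curvature and bounded geometry, and is a smooth Cheeger--Gromov limit of complete simply connected manifolds $(M, \lambda_k g(t_k))$ with uniformly nonnegative sectional curvature, the limit is simply connected. Here the essential input is that for manifolds with a uniform lower sectional (indeed Ricci, via a lower bound) curvature bound and noncollapsing, the fundamental group does not jump up in the limit --- this follows from the fact that short loops in the approximating manifolds, being noncontractible in a large ball, would force a short noncontractible loop in the limit, contradicting that $M$ is simply connected, \emph{provided} one controls how disks filling a loop can leave a fixed ball. The standard way to make this rigorous for nonnegatively curved spaces is to use that a simply connected complete manifold of nonnegative sectional curvature has all its metric balls simply connected once one knows the soul is a point or splits off correctly --- but more robustly, one uses the Cheeger--Gromov convergence directly together with the fact that a loop and its filling disk in $M$ may be taken inside a ball of controlled radius because in nonnegative curvature one has no small-scale topology: a loop of length $\ell$ in $M$ bounds a disk of diameter $O(\ell)$. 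Rescaling, the disk stays in a fixed ball, and for $k$ large this ball lies in the image of $\Phi_k$, so the loop is contractible in $\bar M$.

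\begin{remark}
The main obstacle is precisely the \emph{localization of the filling disk}: simple connectivity of $M$ only says loops bound globally, and one must argue that in nonnegative sectional curvature the filling can be kept in a ball whose rescaled radius stays bounded, so that it survives passage to the Cheeger--Gromov limit. I expect this to follow from a Jacobi field / Toponogov argument controlling the diameter of a minimal filling disk in terms of the length of its boundary, combined with the distance comparison of Lemma~\ref{TypeIdistance} to pass between the time slices $g(t_k)$ and $g(0)$.
\end{remark}
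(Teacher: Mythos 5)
There is a genuine gap, and it sits exactly at the step you yourself flag as the main obstacle: the localization of the filling. Your proposed mechanism for it --- ``in nonnegative curvature there is no small-scale topology: a loop of length $\ell$ bounds a disk of diameter $O(\ell)$'' --- is false. A flat half-cylinder $S^1\times[0,\infty)$ capped off by a hemisphere is a complete, simply connected surface of nonnegative curvature in which the circle $S^1\times\{D\}$ has fixed length but every null-homotopy must sweep over the cap, so any filling disk has diameter at least comparable to $D$, which is unbounded. For the same reason there is no general ``stability of the fundamental group'' under pointed Cheeger--Gromov limits with a lower curvature bound: the same capped cylinder with basepoints drifting down the cylinder converges smoothly to the full cylinder $S^1\times\mathbb{R}$, which is not simply connected, even though every approximant is. So no Jacobi field or Toponogov argument can give the filling-diameter bound you hope for, and without it the loops $\Gamma_k=\Phi_k(\Gamma)$ may a priori only bound disks that escape the region where $\Phi_k^{-1}$ is defined --- which is precisely the issue. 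The distance comparison of Lemma~\ref{TypeIdistance} is used correctly in your sketch, but it only compares time slices; it does not by itself confine the homotopies.

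The paper closes this gap with a different idea: apply the soul theorem to a \emph{single fixed} time slice $(M,g_m(-1))$. The soul $\Sigma$ is compact and totally convex, $M$ is diffeomorphic to its normal bundle, and since $M$ is simply connected so is $\Sigma$; retracting along the fibers and then contracting inside $\Sigma$ contracts every $\Gamma_k$ (all of which lie in a fixed ball $B_{g_m(-1)}(p,R+2)$, by the convergence) within a fixed region $N'\subset B_{g_m(-1)}(p,R')$ that does not depend on $k$. Only then does the Type~I distance distortion enter: it shows $B_{g_m(-1)}(p,R')\subset B_{g_k(-1)}(p,R'+C)$ for all $\lambda_k\le\lambda_m$, so the whole contraction takes place in a ball of \emph{bounded rescaled radius}, which for large $k$ lies in the image of $\Phi_k$; pulling back by $\Phi_k^{-1}$ contracts $\Gamma$ in $\bar M$. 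In short, the localization comes not from a length-versus-filling-diameter estimate (which fails), but from the compactness of the soul of one fixed metric together with the Type~I control on how distances grow backwards in time. Your write-up would need this (or an equivalent) input to become a proof.
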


\begin{proof}
Suppose $M$ is simply connected. Let $\Gamma \subset \bar M$ be the image of a continuous map from $S^1$ into $\bar M$. We claim that $\Gamma$ can be contracted to a point in $\bar M$ via a continuous homotopy. 

By definition, there is a point $p \in M$, a point $\bar p \in \bar M$, and a sequence of scales $\lambda_k \to 0$ such that $(M, \lambda_k g(\lambda_k^{-1}t), p)$ converges to $(\bar M, \bar g(t), \bar p)$ in the pointed Cheeger--Gromov sense. Let us write $g_k(t) := \lambda_k g(\lambda_k^{-1} t)$. In particular, there are maps $\Phi_k : B_{\bar g(-1)}(\bar p, k) \to M$ with the following properties:
	\begin{itemize}
		\item Each $\Phi_k$ is a diffeomorphism onto its image, and $\Phi_k(\bar p) = p$.
		\item The pulled back metrics $\Phi_k^* g_k(-1)$ converge locally smoothly to $\bar g(-1)$ as $k \to \infty$.
	\end{itemize}
	
For each $k$, we define $\Gamma_k := \Phi_k(\Gamma)$. We may choose $R > 0$ so that 
	\[\Gamma \subset B_{\bar g(-1)}(\bar p, R-1).\]
It follows that there is an integer $m$ such that 
	\begin{equation}
		\Gamma_k \subset B_{g_k(-1)}(p, R)
	\end{equation}
for all $k \geq m$. Also, making $m$ larger if necessary, we can ensure that 
    \[B_{g_k(-1)}(p, R) \subset \Phi_k\Big(B_{\bar g(-1)}(\bar p, R+1)\Big) \subset B_{g_m(-1)}(p,R+2)\]
for all $k \geq m$. We thus have
	\[\Gamma_k \subset B_{g_m(-1)}(p,R+2)\]
for all $k \geq m$. It will be important that the right-hand side is independent of $k$. 

By the soul theorem \cite{Cheeger-Gromoll_soul}, there is a compact totally convex submanifold $\Sigma$ of $(M, g_m(-1))$ such that $M$ is diffeomorphic to the normal bundle of $\Sigma$. Let us denote the normal bundle of $\Sigma$ by $N$, and identify $N$ with $M$. Observe that, by scaling in the fiber, we obtain a deformation retraction of $N$ onto $\Sigma$. Since $M = N$ is assumed to be simply connected, this implies that $\Sigma$ is simply connected.

Let $N'$ be the subbundle of $N$ consisting of normal vectors whose length with respect to $g_m(-1)$ is at most $E$. If $E$ is chosen large enough, then 
    \[\Gamma_k \subset B_{g_m(-1)}(p,R+2) \subset N',\]
for all $k \geq m$. Therefore, we can contract $\Gamma_k$ to a point in $N'$ via a continuous homotopy, by first retracting onto $\Sigma$, and then using that $\Sigma$ is simply connected to contract to a point in $\Sigma$. Choosing $R'>R+2$ large enough so that 
	\[N' \subset B_{g_m(-1)}(p,R'),\]
we ensure that each $\Gamma_k$ can be contracted to a point in $B_{g_m(-1)}(p,R')$. 

We have 
	\[B_{g_m(-1)}(p, R') = B_{g(-\lambda_m^{-1})}(p, R' / \sqrt{\lambda_m}).\]
By the Type~I distance distortion estimate (Lemma~\ref{distance distortion}),
	\[B_{g(-\lambda_m^{-1})}(p, R' / \sqrt{\lambda_m}) \subset B_{g(t)}(p, R' / \sqrt{\lambda_m} + C\sqrt{-t})\]
for all $t \leq -\lambda_m^{-1}$, so for $\lambda_k \leq \lambda_m$ we have 
	\[B_{g(-\lambda_m^{-1})}(p, R' / \sqrt{\lambda_m}) \subset B_{g(-\lambda_k^{-1})}(p, R'/\sqrt{\lambda_m} + C/\sqrt{\lambda_k}).\]
	It follows that 
	\[B_{g(-\lambda_m^{-1})}(p, R'/\sqrt{\lambda_m}) \subset B_{g_k(-1)}(p, R'\sqrt{\lambda_k}/\sqrt{\lambda_m} + C),\]
	and hence 
	\[B_{g_m(-1)}(p, R') \subset B_{g_k(-1)}(p, R' + C)\]
whenever  $\lambda_k \leq \lambda_m$. In particular, for all sufficiently large $k$, each of the loops $\Gamma_k$ can be contracted to a point in $B_{g_k(-1)}(p, R' + C)$. But for sufficiently large $k$ the image of $B_{\bar g(-1)}(\bar p, k)$ under $\Phi_k$ contains $B_{g_k(-1)}(p, R' + C)$. Since $\Phi_k$ is a diffeomorphism onto its image, for sufficiently large $k$, we can take any homotopy of $\Gamma_k$ to a point in $B_{g_m(-1)}(p, R')$ and pull back by $\Phi_k$. This yields a homotopy of $\Gamma$ to a point in $\bar M$. We conclude that $\bar M$ is simply connected.  
\end{proof}

We are now ready to prove Theorem~\ref{main}. 

\begin{proof}[Proof of Theorem~\ref{main}]
For simplicity, we assume $T = 0$. This can be arranged by shifting the time variable, and so does not constitute a loss of generality. 

Fix $p \in M$ and a sequence of positive numbers $\lambda_k \to 0$. Let $g_k(t):=\lambda_k g(\lambda_k^{-1}t)$. Appealing to Lemma~\ref{shrinker}, after passing to a subsequence if necessary, we can extract an asymptotic shrinker $(\bar M, \bar g(t), \bar p)$, $t\in(-\infty,0)$, as a limit of the sequence $(M, g_k(t), p)$.
For each $R>0$ and $t < 0$ we have
    \[\lim_{k \to \infty} d_{GH}\Big(B_{g_k(t)}(p,R), B_{\bar g(t)}(\bar p, R)\Big) = 0
    \]
where $d_{GH}$ stands for the pointed Gromov--Hausdorff distance. Here, and for the remainder of the proof, all balls are closed.

After passing to a further subsequence if necessary, we may assume the sequence $(M,\sqrt{\lambda_k}d_{g(-1)}, p)$ converges in the pointed Gromov--Hausdorff sense to an asymptotic cone for $(M,g(-1))$, which we denote $(\hat M, \hat d, \hat p)$. For each $R>0$ we write $\hat B(R)$ for the closed metric ball of radius $R$ about $\hat p$ in $\hat M$. We then have 
    \[\lim_{k \to \infty} d_{GH}\Big(B_{\lambda_k g(-1)}(p,R), \hat B(R)\Big)= 0\]
for every $R>0$.
    
Appealing to Lemma~\ref{simply connected}, we conclude that the asymptotic shrinker $(\bar M, \bar g(t))$ is simply connected. Since $(\bar M, \bar g(t))$ is also noncompact and has nonnegative sectional curvature, by a result of Munteanu--Wang \cite[Corollary 3]{Munteanu-Wang}, it splits a Euclidean factor $\mathbb{R}^m$, where $m \in \{1,\dots,n\}$. We will use this fact to establish that $\hat M$ is isometric to $\mathbb{R}^m$. 

Since $d_{g_k(t)} = \sqrt{\lambda_k}d_{g(\lambda_k^{-1}t)}$, we can use Lemma~\ref{TypeIdistance} to compare  $B_{\lambda_k g(-1)}(p,R)$ and $B_{g_k(t)}(p, R)$, as follows. For $x$ and $y$ in $M$ we have
\begin{align}\label{distance distortion}
        |d_{g_k(t)}(x, y) - \sqrt{\lambda_k}\,d_{g(-1)}(x,y)| &=  \sqrt{\lambda_k}\,|d_{g(\lambda_k^{-1}t)}(x, y) - d_{g(-1)}(x,y)| \notag \\
        & \leq \sqrt{\lambda_k}\,C\,(\sqrt{-\lambda_k^{-1}t} + 1) \notag \\
        &\leq C(\sqrt{-t} + \sqrt{\lambda_k}).
\end{align}
For any fixed $R>0$ and $t < 0$, Lemma~\ref{TypeIdistance} also tells us that 
    \[B_{g_k(t)}(p, R) \subset B_{\lambda_k g(-1)}(p, R)\]
whenever $k$ is so large that $\lambda_k^{-1} t < -1$. On the other hand \eqref{distance distortion} implies
    \[B_{\lambda_k g(-1)}(p,R) \subset B_{g_k(t)}(p, R + C(\sqrt{-t} + \sqrt{\lambda_k})).\]
Combining these two facts we obtain
    \[d_{GH}\Big(B_{\lambda_k g(-1)}(p,R) , B_{g_k(t)}(p, R)\Big) \leq C(\sqrt{-t} + \sqrt{\lambda_k}).\]

We now apply the triangle inequality to estimate
    \begin{align*}
    d_{GH}\Big(&B_{g_k(t)}(p, R), \hat B(R)\Big)\\
    &\leq d_{GH}\Big(B_{g_k(t)}(p, R), B_{\lambda_kg(-1)}(p,R)\Big) + d_{GH}\Big(B_{\lambda_kg(-1)}(p,R), \hat B(R)\Big) \\
    &\leq C(\sqrt{-t} + \sqrt{\lambda_k}) + d_{GH}\Big(B_{\lambda_kg(-1)}(p,R), \hat B(R)\Big).
    \end{align*}
Combining this with
    \begin{align*}
        d_{GH}\Big(&B_{\bar g(t)}(\bar p, R), \hat B(R)\Big)\\
        &\leq d_{GH}\Big(B_{\bar g(t)}(\bar p, R), B_{g_k(t)}(p, R)\Big) + d_{GH}\Big(B_{g_k(t)}(p, R), \hat B(R)\Big),
    \end{align*}
and taking the limit as $k \to \infty$, we obtain
    \[d_{GH}\Big(B_{\bar g(t)}(\bar p, R), \hat B(R)\Big) \leq C\sqrt{-t}\]
for every $R>0$ and $t < 0$. It follows that, for a sequence $t_j \to 0$, the pointed Gromov--Hausdorff limit $(\bar M, \bar g(-t_j), \bar p)$ is $(\hat M, \hat d, \hat p)$. Recalling that $(\bar M, \bar g(t))$ splits as the product of $\mathbb{R}^m$ with a compact factor, we see that $(\hat M, \hat d)$ is isometric to $\mathbb{R}^m$. 

We now appeal to Lemma~\ref{cone split}. This shows that $(M, g(t))$ is isometric to the product of $\mathbb{R}^m$ with a compact Riemannian manifold at each time. The cases $m = n$ and $m = n - 1$ are ruled out by the assumption that $(M,g(t))$ is nonflat.
\end{proof}


\section{Type~I solutions are shrinking solitons}\label{einstein section}

In this section we prove Theorem~\ref{einstein}. Although this result was previously established by Li in \cite{Li}, using Theorem~\ref{main}, we are able to give a shorter proof. Let $(M,g(t))$, $t \in (-\infty,T]$, be a simply connected Type~I ancient $\kappa$-solution which is weakly PIC2. By Theorem~\ref{main}, we know that if $(M,g(t))$ is noncompact, then it splits as the product of a Euclidean factor with a compact solution. We claim that the compact factor is locally symmetric and has positive Ricci curvature. In fact, we prove the following stronger statement.

\begin{proposition}\label{compact einstein}
Let $(M,g(t))$, $t \in (-\infty, T]$, be a compact Type~I ancient $\kappa$-solution which is weakly PIC2. Then $(M,g(t))$ is a locally symmetric space, and its universal cover splits as a product of compact Einstein manifolds with positive Ricci curvature. 
\end{proposition}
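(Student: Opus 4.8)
The plan is to run a blow-down argument at $t \to -\infty$ to produce an asymptotic shrinker, use known rigidity for compact shrinking solitons that are weakly PIC2, and then propagate this rigidity back to all times. First I would apply Lemma~\ref{shrinker} to extract an asymptotic shrinker $(\bar M, \bar g(t), \bar p)$, $t \in (-\infty,0)$, as a Cheeger--Gromov limit of rescalings $(M,\lambda_k g(\lambda_k^{-1}t),p)$ with $\lambda_k \to 0$. Since $M$ is compact, Hamilton's distance distortion estimate (cf. Lemma~\ref{TypeIdistance}) forces a uniform diameter bound on the rescaled flows at any fixed time, so $\bar M$ is compact and diffeomorphic to $M$; moreover the weakly PIC2 condition is preserved under Cheeger--Gromov limits, so $(\bar M, \bar g(t))$ is a compact weakly PIC2 gradient shrinking soliton. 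By the classification of such solitons (Brendle--Huisken--Sinestrari for the strictly PIC2 case, together with the splitting behaviour in the weakly PIC2 case — see also \cite{Ni} and the rigidity statements used in \cite{Li}), the universal cover of $(\bar M, \bar g(t))$ is isometric to a product of round spheres and compact rank-one symmetric spaces, hence in particular a locally symmetric space with positive Ricci curvature whose universal cover is a product of Einstein manifolds.

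Next I would transfer this conclusion from $t = -\infty$ back to the given solution. The key point is that along a compact ancient $\kappa$-solution the normalized flow converges (after passing to a subsequence) not only at $-\infty$ but the asymptotic shrinker controls the geometry: one shows that the scale-invariant quantity measuring failure of local symmetry (e.g. $|\nabla \Rm|^2 / |\Rm|^3$, or the tensor $\nabla \Rm$ compared against the soliton identity) must vanish identically. Concretely, I would argue that if the compact factor were not locally symmetric at some time, then by the monotonicity/almost-monotonicity furnished by Perelman's $\mathcal{W}$-functional or the reduced volume, the blow-down limit would have to be strictly ``more collapsed'' or strictly non-soliton, contradicting that $(\bar M, \bar g)$ is a genuine gradient shrinker. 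Alternatively — and this is cleaner — since $(M, g(t))$ is Type~I and $\kappa$-noncollapsed, Perelman's reduced volume is monotone and its limiting value equals that of the asymptotic shrinker; rigidity in the equality case of the reduced volume monotonicity then forces $(M,g(t))$ itself to be a gradient shrinking soliton. Being a compact weakly PIC2 shrinking soliton, it is locally symmetric with the stated product structure by the same classification invoked above.

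The main obstacle is the rigidity step: upgrading ``the asymptotic shrinker is locally symmetric'' to ``$(M,g(t))$ is locally symmetric for all $t$''. A priori the blow-down only records the coarse limit at $-\infty$, and one must rule out that the solution is a non-self-similar ancient flow that merely resembles a symmetric shrinker near $t = -\infty$. The way around this is precisely the equality case of Perelman's reduced-volume (or entropy) monotonicity: for a Type~I, $\kappa$-noncollapsed ancient solution, the based reduced volume $\tilde V(\tau)$ is nonincreasing and bounded, hence converges as $\tau \to \infty$; its limit is realized by the asymptotic shrinker, and if that limit is also attained in the limit $\tau \to 0$ (which holds here because the solution is ancient and smooth up to $t=T$, so $\lim_{\tau \to 0}\tilde V(\tau)$ exists), then $\tilde V$ is constant, forcing the soliton equation. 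I would cite \cite{Enders-Mueller-Topping} for the precise form of the reduced-volume rigidity at Type~I singularities, applied here with the roles of $t \to T$ and $t \to -\infty$ reversed, and then invoke the PIC2 shrinker classification to finish.

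Once Proposition~\ref{compact einstein} is established, Theorem~\ref{einstein} follows immediately: Theorem~\ref{main} reduces the noncompact case to a product $\mathbb{R}^m \times (\tilde M, \tilde g(t))$ with $\tilde M$ compact, and $(\tilde M, \tilde g(t))$ inherits the weakly PIC2 and Type~I properties, so Proposition~\ref{compact einstein} applies to it.
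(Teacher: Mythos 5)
There is a genuine gap at the heart of your plan, namely the rigidity step you call ``cleaner''. For a smooth flow the based reduced volume always satisfies $\lim_{\tau \to 0}\tilde V(\tau) = 1$, so constancy of $\tilde V$ would force the flow to be the flat Gaussian shrinker; the mere existence of the limit as $\tau \to 0$ gives you nothing, and you cannot deduce constancy from the fact that the asymptotic shrinker is symmetric. More fundamentally, the conclusion you are aiming for --- that $(M,g(t))$ is itself a gradient shrinking soliton --- is false in this generality: the product of two round $2$-spheres of \emph{different} radii is a compact, weakly PIC2, Type~I ancient $\kappa$-solution which is locally symmetric but is not a shrinking soliton (its blow-down is the equal-radii soliton, and $\tilde V$ is strictly decreasing). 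Indeed the paper points out right after Theorem~\ref{einstein} that the compact factor need not be a shrinking soliton. The result of Enders--M\"uller--Topping produces solitons only as blow-up/blow-down \emph{limits}, not for the solution itself, so no version of the equality case can carry the symmetry of the shrinker back to $(M,g(t))$ in one stroke.

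A second issue is that your first step invokes a classification of compact weakly PIC2 shrinking solitons as a citable black box (and mis-states it: the possible factors are compact-type symmetric spaces in general, e.g.\ $SU(3)/SO(3)$, not only round spheres and rank-one spaces). That statement is essentially the content one has to prove. The paper instead proves local symmetry of the ancient solution directly (Proposition~\ref{compact symmetry}) by applying the Berger holonomy theorem at times $t_k \to -\infty$: exceptional holonomy gives Einstein, hence locally symmetric by Brendle; holonomy $SO(n)$ gives strictly PIC2 via the Brendle--Schoen strong maximum principle, and constant curvature is propagated forward to all times by Brendle--Huisken--Sinestrari or B\"ohm--Wilking pinching; holonomy $U(n/2)$ is the K\"ahler case, handled with Mok, Bando--Mabuchi and Chen--Tian's forward convergence --- and only there does a reduced-volume argument appear, working precisely because the backward and forward limits are both $\mathbb{CP}^{n/2}$, which squeezes $\tilde V$ to be constant. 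This forward-in-time propagation is exactly what your outline leaves unaddressed once the soliton rigidity is removed. The final splitting statement is then soft: the asymptotic shrinker is compact and locally symmetric, hence Einstein with positive constant, so Bonnet--Myers makes the universal cover compact, and the structure theory of symmetric spaces gives the product of compact-type Einstein factors with positive Ricci curvature.
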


We first establish local symmetry. This is achieved by a standard argument based on the Berger holonomy theorem. After reducing to the case of a simply connected irreducible solution, one can argue that $(M,g(t))$ is either locally symmetric or else has an asymptotic shrinker isometric to $\mathbb{S}^n$ or $\mathbb{CP}^n$. In the latter cases, the solution itself is isometric $\mathbb{S}^n$ or $\mathbb{CP}^n$, respectively. For the sake of completeness we provide a detailed account of these arguments (versions of which have appeared in \cite{Ni, Deng-Zhu, Li}). 

We will make use of the following lemma.

\begin{lemma}\label{sphere}
If $(M,g)$ is a symmetric space which is homeomorphic to $\mathbb{S}^n$ then, up to scaling, $(M,g)$ is isometric to $\mathbb{S}^n$.
\end{lemma}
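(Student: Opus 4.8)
The statement to prove is Lemma~\ref{sphere}: a symmetric space homeomorphic to $\mathbb{S}^n$ must be, up to scaling, isometric to the round sphere. The plan is to use the classification of (irreducible) symmetric spaces together with elementary topological obstructions. First I would observe that a compact symmetric space $(M,g)$ has nonnegative sectional curvature, hence by the de~Rham decomposition its universal cover splits isometrically as a product $\mathbb{R}^k \times M_1 \times \cdots \times M_\ell$ of a Euclidean factor and irreducible symmetric spaces. Since $M$ is homeomorphic to $\mathbb{S}^n$, it is compact with finite fundamental group (in fact simply connected for $n \geq 2$, and the $n=1$ case is trivial), so the universal cover is again $\mathbb{S}^n$ and in particular compact; this forces $k = 0$. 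Moreover $\mathbb{S}^n$ is not a nontrivial topological product (e.g. by the Künneth formula, a product of two closed manifolds of positive dimension has nonvanishing cohomology in an intermediate degree, whereas $\widetilde H^i(\mathbb{S}^n) = 0$ for $0 < i < n$), so we must have $\ell = 1$: the space is irreducible.

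Next I would invoke the Cartan classification of simply connected irreducible symmetric spaces of compact type. Each such space is either a compact simple Lie group with a bi-invariant metric, or a quotient $G/K$ from Cartan's list. I would then rule out every possibility except $\mathbb{S}^n$ by a cohomological or homotopy-theoretic argument: an irreducible symmetric space homeomorphic to a sphere must have the rational cohomology ring of a sphere, and among irreducible symmetric spaces this pins down the rank-one spaces, and among those only $\mathbb{S}^n$ is homeomorphic (indeed even homotopy equivalent) to a sphere --- the other compact rank-one symmetric spaces $\mathbb{RP}^n$, $\mathbb{CP}^n$, $\mathbb{HP}^n$, $\mathbb{OP}^2$ all have nontrivial cohomology in intermediate degrees or nontrivial fundamental group. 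Higher-rank irreducible symmetric spaces have even larger cohomology (their rational cohomology is that of a product of odd spheres, with at least two factors), so they are immediately excluded. Finally, since the isometry classes of round spheres are parametrised by a single scaling factor, ``isometric to $\mathbb{S}^n$'' should be read up to scaling, and the conclusion follows.

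The main obstacle, such as it is, is organisational rather than deep: one must either cite the classification of symmetric spaces and the cohomology of compact rank-one symmetric spaces, or else give a self-contained argument. A cleaner self-contained route, which I would prefer to present, is this: an irreducible symmetric space has holonomy acting irreducibly, so by Berger's theorem its holonomy is one of a short list; combined with the symmetric-space condition this already gives strong restrictions, but more directly, one can argue via the Killing form and the fact that a symmetric space of rank $r$ contains a totally geodesic flat $r$-torus if it is of compact type with $r \geq 1$ --- no, cleaner still: a compact symmetric space which is homotopy equivalent to a sphere must have rank one (since rank $\geq 2$ forces extra rational cohomology), and the compact rank-one symmetric spaces are classified and only $\mathbb{S}^n$ qualifies topologically. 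I expect the proof in the paper to simply cite the classification, which is the honest and efficient thing to do; I would do the same, noting that the only compact rank-one symmetric space homeomorphic to $\mathbb{S}^n$ is $\mathbb{S}^n$ itself.
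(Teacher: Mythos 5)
Your route is genuinely different from the paper's. The paper's proof is essentially two lines: by a theorem of Kostant, a symmetric space homeomorphic to a topological sphere has holonomy group $SO(n)$; since the curvature tensor of a symmetric space is parallel, it is invariant under the holonomy representation, and the only $SO(n)$-invariant algebraic curvature tensors are the constant-curvature ones, so $(M,g)$ is round up to scaling. Your approach instead goes through the de Rham decomposition and Cartan's classification: simple connectedness and compactness of $M\cong\mathbb{S}^n$ (for $n\geq 2$) rule out Euclidean and noncompact-type factors, K\"unneth rules out reducibility, and then one inspects the list of irreducible compact symmetric spaces. This can be made to work, but it invokes much heavier input (the full classification, including the group-type entries such as $SU(2)$, whose bi-invariant metric is indeed round $\mathbb{S}^3$) than the paper's argument, whose only nontrivial ingredient is Kostant's holonomy theorem.

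There is, however, a genuine error in the step where you exclude higher rank: having the rational cohomology of a sphere does \emph{not} force rank one, and the rational cohomology of a higher-rank irreducible symmetric space is in general not that of a product of at least two odd spheres (that description is only valid for the group-type spaces). Concretely, $SU(3)/SO(3)$, the Wu manifold, is an irreducible symmetric space of rank two which is a rational homology $5$-sphere, so your rational-cohomology filter fails to exclude it. The gap is repairable because your hypothesis is homeomorphism (hence homotopy equivalence) to $\mathbb{S}^n$, which is much stronger: $H_2(SU(3)/SO(3);\mathbb{Z})\cong\mathbb{Z}/2$, so it is not a homotopy sphere. To make the argument correct, run the exclusion with integral homology or the homotopy type rather than rational cohomology: among the simply connected irreducible compact symmetric spaces, only $\mathbb{S}^n$ itself (including $SU(2)\cong\mathbb{S}^3$ in the group case) is a homotopy sphere, while $\mathbb{CP}^m$, $\mathbb{HP}^m$, $\mathbb{OP}^2$ and all remaining entries of Cartan's list have nontrivial intermediate integral homology or fundamental group. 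With that correction your classification-based proof is valid, though considerably longer than the holonomy argument the paper uses.
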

\begin{proof}
By a result of Kostant \cite{Kostant}, since $M$ is a topological sphere, the holonomy group of $(M,g)$ is $SO(n)$. Since the curvature tensor of $(M,g)$ is parallel, this immediately implies that $(M,g)$ has constant curvature. 
\end{proof} 

\begin{proposition}\label{compact symmetry}
Let $(M,g(t))$, $t \in (-\infty, T]$, be a compact Type~I ancient $\kappa$-solution which is weakly PIC2. Then $(M,g(t))$ is a locally symmetric space.
\end{proposition}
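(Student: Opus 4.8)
The plan is to reduce to the simply connected, irreducible case via the de Rham decomposition theorem, then apply Berger's holonomy classification and rule out the non-locally-symmetric possibilities one at a time using the Type~I hypothesis together with the structure of asymptotic shrinkers. Since local symmetry is a property of the universal cover that is insensitive to which time slice we pick (the $g(t)$ are all related by diffeomorphism and scaling, being a compact ancient $\kappa$-solution), it suffices to show the universal cover $(\tilde M, \tilde g(t))$ is locally symmetric. By the Berger--Simons holonomy theorem, $(\tilde M, \tilde g(-1))$ is either a Riemannian product, a locally symmetric space, or has holonomy equal to one of $SO(n)$, $U(n/2)$, $SU(n/2)$, $Sp(n/4)$, $Sp(n/4)\cdot Sp(1)$, $G_2$, or $\mathrm{Spin}(7)$. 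The weakly PIC2 condition is preserved by the de Rham splitting (each factor of a weakly PIC2 product is weakly PIC2), and the splitting is respected by the flow, so by induction on dimension we may assume $(\tilde M, \tilde g(t))$ is irreducible; if it is locally symmetric we are done, so we must exclude the seven special holonomy groups when the space is not locally symmetric.

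The key mechanism is to extract an asymptotic shrinker $(\bar M, \bar g(t))$ for $(\tilde M, \tilde g(t))$ using Lemma~\ref{shrinker}. First I would observe that, because $\tilde M$ is compact and simply connected, Hamilton's distance distortion estimate (as recorded in the remark after Lemma~\ref{shrinker}) forces $\bar M$ to be diffeomorphic to $\tilde M$; in particular $\bar M$ is compact. Next, holonomy is controlled under smooth Cheeger--Gromov convergence with a restricted-holonomy bound: the asymptotic shrinker inherits a restriction on its holonomy from $(\tilde M, \tilde g(-1))$, so the holonomy of $\bar M$ is contained in that of $\tilde M$. For the holonomy groups $U(n/2)$, $SU(n/2)$, $Sp(n/4)$, $Sp(n/4)\cdot Sp(1)$, $G_2$, and $\mathrm{Spin}(7)$, the manifold is Kähler, Calabi--Yau, quaternionic Kähler, hyperkähler, or has a parallel form forcing Ricci-flatness or Einstein with special structure. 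A compact shrinking soliton with one of these holonomies must be Kähler (the last several cases force $\Ric$ to be a fixed multiple of $g$, hence Einstein, and a compact Einstein shrinker is known; the quaternionic and exceptional cases are ruled out because they force nonpositive scalar curvature contradictions or reduce, via the soliton equation $\Ric + \nabla^2 f = \tfrac{1}{2}g$ and parallel structure, to the symmetric case). This leaves the Kähler case, holonomy $U(n/2)$: here $(\bar M,\bar g(t))$ is a compact Kähler--Ricci shrinker that is weakly PIC2, hence has nonnegative bisectional curvature, and by the work on the Frankel conjecture / Mok's theorem such a shrinker is biholomorphic to $\mathbb{CP}^{n/2}$ and isometric to the Fubini--Study metric, so $\bar M \cong \mathbb{CP}^{n/2}$. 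Similarly, in the generic holonomy case $SO(n)$, either $(\tilde M,\tilde g(-1))$ is locally symmetric or the asymptotic shrinker is forced by a strictness/rigidity argument (combining weakly PIC2 with the PIC2 convergence theory and Brendle--Huisken--Sinestrari, or Böhm--Wilking) to be the round $\mathbb{S}^n$.

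Having identified $\bar M$ as $\mathbb{S}^n$ or $\mathbb{CP}^{n/2}$, the final step is to conclude that $(\tilde M, \tilde g(t))$ is itself the round sphere or $\mathbb{CP}^{n/2}$, and hence locally symmetric. I would argue as follows: the asymptotic shrinker is the limit of $(\tilde M, \lambda_k \tilde g(\lambda_k^{-1} t), p)$, and since $\bar M = \mathbb{S}^n$ (resp. $\mathbb{CP}^{n/2}$) is strictly PIC2 (resp. has positive bisectional curvature and is, after a fixed time, within the PIC2-preserving convergence basin), for large $k$ the rescaled flow $(\tilde M, \lambda_k \tilde g(\lambda_k^{-1}t))$ is strictly PIC2 at some time; by the convergence theorem of Brendle (PIC2 implies convergence to a constant-curvature or Kähler symmetric metric) and Brendle--Huisken--Sinestrari, the Ricci flow starting from such a metric becomes round, which combined with the ancient and Type~I structure forces $(\tilde M,\tilde g(t))$ to be the corresponding shrinking soliton on $\mathbb{S}^n$ or $\mathbb{CP}^{n/2}$ for all $t$. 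In either case $(\tilde M, \tilde g(t))$ is locally symmetric, completing the induction. The main obstacle I anticipate is the passage from a restricted-holonomy or special-structure hypothesis on a Type~I ancient solution to the rigidity of its asymptotic shrinker — in particular, ruling out the exotic holonomy groups cleanly, and handling the Kähler case, where one must invoke the classification of compact Kähler--Ricci shrinkers with nonnegative bisectional curvature and then upgrade from the shrinker back to the original solution. Controlling holonomy under Cheeger--Gromov limits, and ensuring the de Rham factors remain Type~I ancient $\kappa$-solutions, are the technical points requiring the most care.
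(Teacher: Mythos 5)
Your overall strategy is the same as the paper's (reduce to the simply connected irreducible case, apply Berger's holonomy theorem, and treat the generic and K\"ahler holonomy cases via the asymptotic shrinker, Mok's theorem, and sphere rigidity), but two steps where you wave your hands are exactly where the real content lies. First, your treatment of the holonomy groups other than $SO(n)$ and $U(n/2)$ does not work as described. You say these cases ``force nonpositive scalar curvature contradictions or reduce, via the soliton equation and parallel structure, to the symmetric case.'' For the Ricci-flat holonomies ($SU$, $Sp$, $G_2$, $\mathrm{Spin}(7)$) one can indeed argue (e.g.\ a nonflat static solution violates the Type~I bound as $t \to -\infty$), but for $Sp(m)\cdot Sp(1)$ the manifold is quaternionic K\"ahler with \emph{positive} Einstein constant, and there is no scalar curvature contradiction; nor can you ``reduce to the symmetric case'' for free, since the classification of positive quaternionic K\"ahler manifolds (LeBrun--Salamon) is open in general. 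The paper disposes of all these cases at once by quoting Brendle's theorem that a compact Einstein manifold with nonnegative isotropic curvature is locally symmetric; without citing that result (or an equivalent), this branch of your case analysis is a genuine gap. Relatedly, your claim that the asymptotic shrinker ``inherits a restriction on its holonomy'' from a Cheeger--Gromov limit is not justified in general; what is true, and what the paper uses, is that specific parallel structures (the K\"ahler form) pass to the limit, and that Berger can be applied directly to the time slices $g(t_k)$, so this detour is both shaky and unnecessary.

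Second, the final upgrade from rigidity of the asymptotic shrinker to rigidity of the ancient solution itself is asserted rather than proved. Knowing that the flow starting from $g(t_k)$ becomes round (or converges to Fubini--Study) forward in time does not by itself force the ancient solution to be the corresponding shrinking soliton; ``combined with the ancient and Type~I structure'' hides the key mechanism. In the $SO(n)$ case the paper invokes Theorem~16 of Brendle--Huisken--Sinestrari (with $\kappa$-noncollapsing supplying the injectivity radius bound), or alternatively B\"ohm--Wilking pinching once the rescaled slices are close to constant curvature. In the K\"ahler case it uses Mok, Hirzebruch--Kodaira/Yau, the vanishing of the Futaki invariant and Bando--Mabuchi to identify the shrinker with $\mathbb{CP}^m$, and then Chen--Tian convergence together with Perelman's reduced-volume monotonicity (constancy of the reduced volume forces the ancient solution to be the soliton). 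You would need to supply one of these rigidity mechanisms explicitly; your appeal to ``the classification of compact K\"ahler--Ricci shrinkers with nonnegative bisectional curvature'' only identifies the shrinker, not the solution. The remaining points you flag (that the de Rham splitting persists backward in time and that local symmetry at early times propagates to $t = T$) are handled in the paper by fixing a time $\bar t$ before which the number of irreducible factors stabilises; this is routine but should be said.
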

\begin{proof}
We need only consider the case that $(M,g(t))$ is simply connected. If not, we pass to the universal cover. This may be noncompact, but it splits as the product of a Euclidean space with a compact factor by Theorem~\ref{main}, in which case it suffices to show that the compact factor is locally symmetric. 

In dimensions 2 and 3, $(M,g(t))$ has constant curvature by \cite{Daskalopoulos-Hamilton-Sesum} and \cite{Perelman}, so let us assume $n \geq 4$. We may also assume without loss of generality that $(M, g(t))$ is irreducible. Indeed, there exists a time $\bar t$ such that $(M, g(t))$ is isometric to a fixed number of irreducible factors for all $t \leq \bar t$. But if $(M,g(t))$ is locally symmetric for $t \leq \bar t$, then this remains true up to $t = T$. 

Consider a sequence of times $t_k \to -\infty$. For each $k$ we apply the Berger holonomy theorem (see eg. \cite[Corollary 10.92]{Besse}). This tells us that we are in one of the following situations, possibly after passing to a subsequence in $k$.

\emph{Case 1.} $(M, g(t_k))$ is a locally symmetric space for all $k$, and hence $(M, g(t))$ is locally symmetric for all $t \in (-\infty, T]$. 

\emph{Case 2.} The holonomy group of $(M, g(t_k))$ is not $SO(n)$ or $U(n/2)$. In this case $(M, g(t_k))$ is Einstein, and is therefore locally symmetric by work of Brendle \cite{Brendle_Einstein}.

\emph{Case 3.} The holonomy group of $(M, g(t_k))$ is $SO(n)$. In this case, by the strong maximum principle proven in \cite[Proposition~10]{Brendle-Schoen_weak}, we conclude that $(M, g(t_k))$ is strictly PIC2, and hence is diffeomorphic to a sphere by \cite{Brendle-Schoen_strict}. At this point we can apply \cite[Theorem 16]{Brendle-Huisken-Sinestrari} to conclude that $(M, g(t))$ has constant curvature for all $t \in (-\infty, T]$. Note the argument in \cite{Brendle-Huisken-Sinestrari} assumes $M$ is even-dimensional, but only in order obtain a lower bound for the injectivity radius. In our case, such a bound follows from the $\kappa$-noncollapsing.

Another way to argue is as follows. Let $(\bar M, \bar g)$ be an asymptotic shrinker for $(M, g(t))$. Once again, by the Berger theorem and \cite[Proposition 10]{Brendle-Schoen_weak}, $(\bar M, \bar g)$ is either strictly PIC2, or else is a symmetric space. In both cases $(\bar M, \bar g)$ has constant curvature, by \cite{Brendle-Schoen_strict} and Lemma~\ref{sphere}, respectively. It follows that, after rescaling, $(M,g(t_k))$ has constant curvature in the limit as $k \to \infty$, and hence $(M,g(t))$ has constant curvature for all $t \in (-\infty, T]$ by the pinching construction in \cite{Boehm-Wilking} (in fact, in this last step Huisken's pinching estimate suffices \cite{Huisken}).

\emph{Case 4.} We have $n = 2m$ and the holonomy group of $(M, g(t_k))$ is $U(m)$. That is, $(M, g(t_k))$ is a K\"{a}hler manifold. In this case we can conclude that $(M,g(t))$ is isometric to $\mathbb{CP}^m$ (we note that an argument similar to the following appeared in \cite{Deng-Zhu}). Indeed, since $(M, g(t_k))$ is weakly PIC2, and therefore has nonnegative sectional curvature, a result of Mok \cite{Mok} implies it is either biholomorphic to $\mathbb{CP}^m$, or else is isometric to a Hermitian symmetric space. In the latter case we are done, so suppose $(M, g(t_k))$ is biholomorphic to $\mathbb{CP}^m$. 

Let $(\bar M, \bar g)$ be an asymptotic shrinker for $(M,g(t))$. We know that $(\bar M, \bar g)$ is K\"{a}hler and is diffeomorphic to $\mathbb{CP}^m$, but such a space is biholomorphic to $\mathbb{CP}^m$ by work of Hirzebruch--Kodaira \cite{Hirzebruch-Kodaira} and Yau \cite{Yau}. Since the Futaki invariant of $\mathbb{CP}^m$ vanishes, we have that $(\bar M, \bar g)$ is a K\"{a}hler--Ricci soliton with vanishing Futaki invariant, and hence $(\bar M, \bar g)$ is K\"{a}hler--Einstein (see eg. \cite[p. 125]{Cao}). Using the uniqueness theorem for K\"{a}hler--Einstein metrics due to Bando and Mabuchi \cite{Bando-Mabuchi}, we conclude that $(\bar M, \bar g)$ is biholomorphically isometric to $\mathbb{CP}^m$. Therefore, after rescaling, we have that $(M, g(t_k))$ converges smoothly to $\mathbb{CP}^m$ as $k \to \infty$. By the convergence result of Chen--Tian \cite{Chen-Tian}, $(M,g(t))$ can be extended to a maximal solution which converges (up to rescaling) to $\mathbb{CP}^m$ forward in time. By Perelman's monotonicity formula for the reduced volume \cite{Perelman}, it follows that the reduced volume of $(M,g(t))$ is constant (equal to that of the shrinking soliton generated by $\mathbb{CP}^m$ --- we refer to \cite[Theorem~2.1]{Naber} for a detailed proof). Therefore, $(M,g(t))$ is a shrinking soliton, and hence is isometric to $\mathbb{CP}^m$. 

In every case, we have found that $(M,g(t))$ is a locally symmetric space. 
\end{proof}

We continue with the proof of Proposition~\ref{compact einstein}. It remains to prove that the universal cover of $(M,g(t))$ is isometric to a product of Einstein manifolds with positive Ricci curvature. This follows from some basic facts about symmetric spaces. Recall that a locally symmetric Riemannian manifold which is also simply connected is a symmetric space (see eg. \cite[Theorem~10.3.2]{Petersen}). Such a space splits as a product of simply connected irreducible factors, each of which is automatically Einstein (see eg. \cite[p. 386]{Petersen}). A simply connected irreducible symmetric space is either of compact type, Euclidean type, or noncompact type. Spaces of compact type have nonnegative curvature operator and positive Ricci curvature, spaces of Euclidean type are flat, and spaces of noncompact type have nonpositive curvature operator and negative Ricci curvature. For Cartan's classification of symmetric spaces, see eg. \cite[Chapter~X]{Helgason}.

\begin{proof}[Proof of Proposition~\ref{compact einstein}]
Let $(\bar M, \bar g)$ be an asymptotic shrinker for $(M,g(t))$. We know that $(\bar M, \bar g)$ is compact and locally symmetric (because of Proposition~\ref{compact symmetry}), so the shrinking soliton equation
    \[\bar \Ric + \bar \nabla^2 f = \lambda \bar g\]
implies that $f$ is constant, and hence $(\bar M, \bar g)$ is Einstein. Since $(M,g(t))$ is nonflat, so is $(\bar M, \bar g)$ (this follows from Perelman's monontonicity formula for the reduced volume --- see \cite[Theorem~2.1]{Naber}), and hence $\lambda > 0$. Using the Bonnet--Myers theorem, we conclude that the universal cover of $\bar M$ is compact. But $M$ and $\bar M$ are diffeomorphic, so the universal cover of $M$ is compact. 

Since $(M,g(t))$ is locally symmetric, we conclude that its universal cover $(\tilde M, \tilde g(t))$ is a compact symmetric space. There is a time $\bar t \leq T$ such that
    \begin{equation}\label{compact split}
        (\tilde M, \tilde g(t)) \cong (M_1, g_1(t)) \times \dots (M_k, g_k(t))
    \end{equation}
for all $t \leq \bar t$, where each of the factors $(M_i,g_i(t))$ is irreducible. Each of these has nonnegative sectional curvature, and may not be flat (by the $\kappa$-noncollapsing assumption). Therefore, each factor is a symmetric space of compact type, and is hence Einstein with positive Ricci curvature. By uniqueness of compact solutions, each $(M_i, g_i(t))$ extends smoothly to $t = T$, and \eqref{compact split} persists for all $t \leq T$. 
\end{proof}

\bibliographystyle{abbrv}
\bibliography{references}

\end{document}